\documentclass[sn-mathphys,Numbered]{sn-jnl}


\usepackage{graphicx}%
\usepackage{multirow}%
\usepackage{amsmath,amssymb,amsfonts}%
\usepackage{amsthm}%
\usepackage{mathrsfs}%
\usepackage[title]{appendix}%
\usepackage{xcolor}%
\usepackage{textcomp}%
\usepackage{manyfoot}%
\usepackage{enumerate}

\usepackage{mathdots}  

\usepackage{tikz}
\colorlet{myblue}{blue!60!black}
\definecolor{bleudefrance}{rgb}{0.19, 0.55, 0.91}

\usepackage{color}
\usepackage[normalem]{ulem}

\newcommand{\R}{\mathbb R}

\newcommand{\Z}{\mathbb{Z}}
\newcommand{\N}{\mathbb{N}}


\makeatletter
\def\moverlay{\mathpalette\mov@rlay}
\def\mov@rlay#1#2{\leavevmode\vtop{%
   \baselineskip\z@skip \lineskiplimit-\maxdimen
   \ialign{\hfil$\m@th#1##$\hfil\cr#2\crcr}}}
\newcommand{\charfusion}[3][\mathord]{
    #1{\ifx#1\mathop\vphantom{#2}\fi
        \mathpalette\mov@rlay{#2\cr#3}
      }
    \ifx#1\mathop\expandafter\displaylimits\fi}
\makeatother
\newcommand{\cupdot}{\charfusion[\mathbin]{\cup}{\cdot}}




\theoremstyle{thmstyleone}%
\newtheorem{theorem}{Theorem}
\newtheorem{proposition}[theorem]{Proposition}%
\newtheorem{corollary}[theorem]{Corollary}
\newtheorem{lemma}[theorem]{Lemma}
\newtheorem{Question}[theorem]{Question}

\newtheorem{remark}[theorem]{Remark}%

\theoremstyle{thmstyletwo}%

\theoremstyle{thmstylethree}%

\raggedbottom

\begin{document}

\title[Cube tilings with linear constraints]{Cube tilings with linear constraints}


\author*[1]{\fnm{Dae Gwan} \sur{Lee}}\email{daegwans@gmail.com, daegwan@kumoh.ac.kr} 

\author[2]{\fnm{G\"otz E.} \sur{Pfander}}\email{pfander@ku.de} 

\author[3]{\fnm{David} \sur{Walnut}}\email{dwalnut@gmu.edu} 

\affil*[1]{\orgdiv{Department of Mathematics and Big Data Science}, \orgname{Kumoh National Institute of Technology}, \orgaddress{\street{Daehak-ro~61}, \city{Gumi}, \state{Gyeongsangbuk-do} \postcode{39177}, \country{Republic of Korea (South Korea)}}}

\affil[2]{\orgdiv{Mathematical Institute for Machine Learning and Data Science (MIDS)}, \orgname{Katholische Universit\"at Eichst\"att--Ingolstadt}, \orgaddress{\street{Hohe-Schul-Str.~5}, \postcode{85049} \city{Ingolstadt}, \country{Germany}}}

\affil[3]{\orgdiv{Department of Mathematical Sciences}, \orgname{George Mason University}, \orgaddress{\street{4400~University Drive}, \city{Fairfax}, \state{VA} \postcode{22030}, \country{USA}}}

%
%


\abstract{We consider tilings $(\mathcal{Q},\Phi)$ of $\mathbb{R}^d$ where $\mathcal{Q}$ is the $d$-dimensional unit cube and the set of translations $\Phi$ is constrained to lie in a pre-determined lattice $A \mathbb{Z}^d$ in $\mathbb{R}^d$. 
We provide a full characterization of matrices $A$ for which such cube tilings exist when $\Phi$ is a sublattice of $A\mathbb{Z}^d$ with any $d \in \N$ or a generic subset of $A\mathbb{Z}^d$ with $d\leq 7$.   
As a direct consequence of our results, we obtain a criterion for the existence of linearly constrained frequency sets, that is, $\Phi \subseteq A\mathbb{Z}^d$, such that the respective set of complex exponential functions $\mathcal{E} (\Phi)$ is an orthogonal Fourier basis for the space of square integrable functions supported on a parallelepiped $B\mathcal{Q}$, where $A, B \in \R^{d \times d}$ are nonsingular matrices given a priori. Similarly constructed Riesz bases are considered in a companion paper \cite{LPW24-second-RB}.}

\keywords{cube tilings, Keller's and Minkowski's conjectures, complex exponentials, orthogonal bases, Fourier bases, parallelepipeds}


\pacs[MSC 2020 Classification]{42B05, 42C15}

\maketitle

\section{Introduction and main results}
\label{sec:intro}

For a measurable set $S \subseteq \R^d$ and a discrete set $\Phi \subseteq \R^d$, we say that 
$(S, \Phi)$ is a \emph{tiling pair} (or simply a \emph{tiling}) if the sets $S {+} \varphi$, $\varphi \in \Phi$, form a partition of $\R^d$, i.e., $\bigcup_{\varphi \in \Phi} S {+} \varphi = \R^d$ and 
$( S{+}\varphi ) \cap ( S{+}\varphi' ) =\emptyset$ for all $\varphi\neq \varphi'$ in $\Phi$.  
We will denote the unit cube in $\R^d$ by $\mathcal{Q} = \mathcal{Q}^{(d)} = [0,1)^d$, and we particularly refer to tilings of the form $(\mathcal{Q},\Phi)$ as \emph{cube tilings}. For a cube tiling   $(\mathcal{Q} ,\Phi)$  and $v\in \Z^d$,
we denote by $\phi_v\in \Phi$ the unique element in $\Phi$ with $v\in \mathcal{Q}^{(d)}+ \phi_v$.   

This paper addresses cube tilings $(\mathcal{Q},\Phi)$ with the constraint that $\Phi \subseteq A \Z^d$ for a predetermined full rank matrix $A\in\R^{d\times d}$.  

\begin{theorem}\label{thm:main}
For a nonsingular matrix $A \in \R^{d \times d}$, the following are equivalent:
\begin{enumerate}[i.]
     \item\label{list:thm1part1} there exists a lattice $\Lambda \subseteq A\Z^d$ such that $(\mathcal{Q}, \Lambda)$ is a tiling;
     \item\label{list:thm1part2} there exist a permutation matrix $P \in \{0,1\}^{d \times d}$ and a nonsingular integer matrix $R \in \Z^{d \times d}$ such that $PAR$ is unitriangular.\footnote{A triangular matrix is called unitriangular if its diagonal entries are all equal to $1$. The condition of $PAR$ being unitriangular can be replaced with $PAR$ being \emph{lower unitriangular} or $PAR$ being \emph{upper unitriangular} (cf.~Remark~\ref{rmk:lower-upper-unitriangular-matrices}).}
 \end{enumerate} 
 For $d\leq 7$, the statements above are also equivalent to:
 \begin{enumerate}[i.]
 \setcounter{enumi}{2}
     \item\label{list:thm1part3} there exists a set $\Phi \subseteq A\Z^d$ such that $(\mathcal{Q}, \Phi)$ is a tiling.
\end{enumerate}   
\end{theorem}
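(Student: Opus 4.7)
The plan is to establish (i) $\Leftrightarrow$ (ii) for all $d$, note that (i) $\Rightarrow$ (iii) is immediate, and close the loop with (iii) $\Rightarrow$ (i) under the hypothesis $d \leq 7$.

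For (ii) $\Rightarrow$ (i): if $PAR$ is lower unitriangular, set $\Lambda := AR\Z^d \subseteq A\Z^d$. Writing $L := PAR$ and using that a lower unitriangular matrix $L$ induces a cube tiling $(\mathcal{Q}, L\Z^d)$ by a direct coordinate-by-coordinate induction (each coordinate of $\bx - L\bn$ pins down the next integer $n_i$), and that the permutation $P$ fixes $\mathcal{Q}$, we get that $(\mathcal{Q}, AR\Z^d)$ tiles. For the converse (i) $\Rightarrow$ (ii): write $\Lambda = AR\Z^d$ with $R \in \Z^{d\times d}$ nonsingular; since $(\mathcal{Q}, AR\Z^d)$ is a lattice cube tiling, Haj\'os's theorem confirming Minkowski's conjecture (in the form characterizing the generator matrices of lattice cube tilings) provides a permutation $P$ and a unimodular $U \in GL_d(\Z)$ such that $PARU$ is unitriangular; replacing $R$ by $RU$ yields (ii). The implication (i) $\Rightarrow$ (iii) is immediate, as every lattice is a set.

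The substantive new content is (iii) $\Rightarrow$ (i) under the hypothesis $d \leq 7$. The key input is \emph{Keller's conjecture}, established in dimensions $\leq 7$ (Perron for $d \leq 6$; Brakensiek--Heule--Mackey--Narv\'aez for $d = 7$): every cube tiling of $\R^d$ contains a twin pair $\varphi, \varphi' \in \Phi$ with $\varphi - \varphi' = e_j$ for some coordinate $j$. Combined with $\Phi \subseteq A\Z^d$, this gives $e_j \in A\Z^d$, yielding an integer vector $\bn_j := A^{-1}e_j$, a candidate column for $R$. To obtain the full matrix, I would iterate: the twin pair should force $e_j$-periodicity of $\Phi$, so the quotient by $\langle e_j\rangle$ induces a cube tiling on $\R^{d-1}$ whose translate set lies in $A'\Z^{d-1}$ for a suitable $(d-1)\times(d-1)$ matrix $A'$ obtained by projecting $A\Z^d$ modulo $\Z e_j$. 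Since $d - 1 \leq 6$, Keller applies again, producing a further basis vector; after $d$ rounds, the accumulated vectors, lifted back to $\R^d$, assemble into an integer matrix $R$ for which $PAR$ is lower unitriangular for a matching permutation $P$, giving (ii) and hence (i).

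The principal obstacle lies in this dimension-reduction step. Keller supplies only a single twin pair, whereas the quotient argument requires full $e_j$-translation invariance of $\Phi$ plus a clean description of the quotient matrix $A'$ so that the inductive hypothesis on $d-1$ applies to $(\mathcal{Q}^{(d-1)}, \Phi') \subseteq A'\Z^{d-1}$. Establishing that a single twin pair forces periodicity of the entire tiling (a rigidity property of cube tilings, essentially dating back to Keller and Perron) and carefully tracking the linear constraint $\Phi \subseteq A\Z^d$ through the quotient is where the bulk of the work will lie; once in hand, the iteration closes cleanly.
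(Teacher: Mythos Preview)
Your treatment of (i) $\Leftrightarrow$ (ii) is correct and essentially matches the paper's; the only point you leave implicit is why a full-rank sublattice $\Lambda \subseteq A\Z^d$ can be written as $AR\Z^d$ with $R$ an integer matrix, but that is standard.

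The genuine gap is in (iii) $\Rightarrow$ (i). Your proposed mechanism---that a single twin pair forces $e_j$-periodicity of $\Phi$, after which one may pass to the quotient---does not work, because the periodicity claim is false. Already in $\R^2$, take $\Phi = \{(n,\alpha(n)+m) : n,m\in\Z\}$ with $\alpha(0)=\alpha(1)=0$ and $\alpha(2)\notin\Z$. Then $(0,0)$ and $(1,0)$ form an $e_1$-twin pair, yet $\Phi+e_1\neq\Phi$. So the rigidity you invoke is not available, and the attribution to Keller--Perron is misplaced: Keller's stronger assertion concerns a single face-to-face \emph{column}, not global translation invariance of the tiling.

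The paper sidesteps both obstacles you flag. First, instead of quotienting by $\Z e_j$ (which would require periodicity), it \emph{slices}: intersecting any cube tiling of $\R^d$ with the hyperplane $\{x_d=0\}$ always produces a cube tiling of $\R^{d-1}$, with translates $\phi^{(d-1)}_v=\pi_d\,\phi^{(d)}_{(v,0)}$; no periodicity is needed. Second, the linear constraint is never tracked through an auxiliary matrix $A'$. The induction (which is run on a statement that does not mention $A$ at all) produces lattice generators of the form $\phi^{(d)}_{(p_i,0)}-\phi^{(d)}_{(q_i,0)}$ for $i<d$, together with $e_d=\phi^{(d)}_{v_d}-\phi^{(d)}_{w_d}$ from the twin pair. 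All of these are differences of elements of $\Phi$, hence lie in $A\Z^d$ automatically. That these $d$ vectors generate a lattice cube tiling is checked by a block-triangular determinant calculation. Thus the constraint $\Phi\subseteq A\Z^d$ plays no role whatsoever in the dimension reduction; it is invoked only once, at the very end.
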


The equivalence of $\ref{list:thm1part1}$ and $\ref{list:thm1part2}$ in Theorem~\ref{thm:main} is a  consequence of the lattice case, that is, $(b)$ of the following fundamental result on cube tilings which is  discussed in some detail in Section~\ref{sec:background}.

\begin{theorem}[Minkowski \cite{Mi1907}, Haj{\'o}s \cite{Ha42}, Keller \cite{Keller1930}, Perron \cite{Pe40a,Pe40b}, Brakensiek et al.~\cite{BHMN20}]
\label{thm:Minkowski-Hajos} 
If $(\mathcal{Q}, \Phi)$ is a cube tiling with 
(a) $d\leq 7$ or (b) $\Phi$ being a lattice, then exists $v,w\in \{0,1\}^d$ so that $Q+{\phi_v}$ and $Q+{\phi_w}$  
are cube twins, that is, both cubes  share a $(d{-}1)$-dimensional face.   
\end{theorem}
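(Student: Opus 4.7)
The plan is to treat the lattice case (b) and the general case (a) separately, since they admit very different arguments.

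For the lattice case (b) --- Minkowski's conjecture, resolved by Haj\'os in 1942 --- I would use the classical algebraic reduction. Given a lattice tiling $(\mathcal{Q},\Lambda)$, one reduces (via a rationality argument on the entries of a basis of $\Lambda$) to a lattice contained in $(1/N)\Z^d$ for some positive integer $N$, so that $G = (1/N)\Z^d / (\Lambda \cap (1/N)\Z^d)$ is a finite abelian group. The cube-tiling condition translates into a \emph{factorization} $G = A_1 A_2 \cdots A_d$ in which each $A_i$ is a ``cyclic simulated'' set $\{0, g_i, 2g_i, \ldots, (k_i{-}1)g_i\}$ coming from the $i$th coordinate direction of the unit cube. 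Haj\'os's theorem on such factorizations then guarantees that at least one $A_i$ is actually a subgroup of $G$, and unwinding this statement produces two translates $\mathcal{Q}+\phi_v, \mathcal{Q}+\phi_w$ that differ by $\pm e_i$, i.e., a cube twin in direction $e_i$.

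For the general case (a) --- Keller's conjecture in dimensions $d\leq 7$ --- the standard reformulation is graph-theoretic. Form the Keller graph $K_d$ on vertex set $\{0,1,2,3\}^d$, with two vertices adjacent whenever they differ by $2 \pmod 4$ in at least one coordinate and differ in at least one additional coordinate. A twinless cube tiling produces a clique of size $2^d$ in $K_d$, so the task is to rule out such cliques. For $d\leq 6$ I would follow Perron's combinatorial case analysis, which exploits the rigid face-matching constraints in a hypothetical twinless tiling to quickly force a contradiction. For $d=7$ no elementary argument is known; I would invoke the 2020 SAT-based proof of Brakensiek--Heule--Mackey--Narv\'aez, which uses engineered symmetry-breaking together with modern SAT-solver technology to exhaust the $K_7$ clique search space.

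The principal obstacle in (b) is the proof of Haj\'os's factorization theorem itself, which proceeds by an intricate induction on $|G|$, reducing to $p$-primary components and invoking delicate ``replacement'' lemmas; the translation from geometry to algebra is comparatively routine, but the group-theoretic heart is deep. The principal obstacle in (a), and the reason $d=7$ held out for eighty years, is essentially computational: no structural insight is known that collapses the astronomical $K_7$ search space to human-tractable size, and for $d\geq 8$ Keller's conjecture is in fact \emph{false} (Lagarias--Shor, Mackey), so the bound $d\leq 7$ is sharp and a dimension-by-dimension analysis cannot be avoided.
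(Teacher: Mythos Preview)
The paper does not prove this theorem at all: it is stated as a classical result due to the cited authors and is used as a black box throughout (see the Background section, where the history of Minkowski's and Keller's conjectures is recounted and the relevant references are given, including the remark that the $d=7$ case was settled by a computer-assisted proof ``that involved a dataset of 200 gigabytes''). There is therefore no ``paper's own proof'' to compare against.

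That said, your outline accurately reflects the standard arguments in the literature: Haj\'os's algebraic reduction of the lattice case to a factorization theorem for finite abelian groups, Perron's case analysis for $d\leq 6$, and the SAT-based exhaustion of the Keller graph $K_7$ for $d=7$. You have also correctly identified that the $d\leq 7$ bound is sharp (Lagarias--Shor, Mackey), which the paper notes as well. One minor point: the reduction from Keller's conjecture to cliques of size $2^d$ in the Keller graph is due to Corr\'adi and Szab\'o, and strictly speaking requires an intermediate periodicity reduction; your sketch elides this, but it is routine.
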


In the non-lattice case, the restriction of $d \leq 7$ is necessary to ensure the existence of cube twins \cite{LS92,Ma02}. 
Under   this restriction, Theorem~\ref{thm:Minkowski-Hajos} can be used to construct for our purpose useful lattice tilings from non-lattice tilings.

\begin{theorem}\label{thm:d-up-to-7-construct-lattice}
    For $d\leq 7$ and a tiling $(\mathcal{Q} ,\Phi)$ of $\R^d$ with $\phi_{(0,\ldots,0)}=(0,\ldots,0)$, there exist \\ $v_1,\ldots,v_d,w_1,\ldots,w_d\in \{0,1\}^d$ 
    such that  
    \begin{align}\label{eqn:existsLatticea}
        \Big( \mathcal{Q} ,(\phi_{v_1}-\phi_{w_1})\Z+\ldots + (\phi_{v_d}-\phi_{w_d})\Z \Big)
    \end{align} is a tiling pair. 
    Moreover, the vectors can be chosen so that there exists a reordering of the coordinates which leads to 
    \begin{align}\label{eqn:existsLatticeb}
        v_j-w_j\in \{0\}^{j-1}\times\{1\}\times\{0,1\}^{d-j}\quad \text{for} \quad j=1,\ldots,d.
    \end{align}    
\end{theorem}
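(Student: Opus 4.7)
The plan is to induct on $d$, using Theorem~\ref{thm:Minkowski-Hajos} to extract a first twin pair and then slicing the tiling with a coordinate hyperplane to reduce to a $(d-1)$-dimensional problem. The base case $d = 1$ is immediate: $\Phi = \Z$, and the choice $v_1 = 1, w_1 = 0$ satisfies all the requirements.

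For the inductive step with $2 \le d \le 7$, Theorem~\ref{thm:Minkowski-Hajos} yields $v, w \in \{0,1\}^d$ with $\phi_v - \phi_w = \pm e_k$ for some $k$. After a coordinate permutation sending $e_k$ to $e_1$ and possibly swapping $v$ and $w$, I may assume $\phi_v - \phi_w = e_1$. A short coordinate-by-coordinate argument using $v - \phi_v, w - \phi_w \in [0,1)^d$ with $v, w \in \{0,1\}^d$ forces $v - w = e_1$ as well (the first coordinate must account for the whole unit, and the other coordinates of the two cubes must coincide). I set $v_1 := v$ and $w_1 := w$, so that the first lattice generator is $\phi_{v_1} - \phi_{w_1} = e_1$.

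Next, I slice with the hyperplane $\{x_1 = 0\}$: let $\pi : \R^d \to \R^{d-1}$ project onto the last $d - 1$ coordinates and set $\Phi_H := \{\phi \in \Phi : (\phi)_1 \in (-1, 0]\}$. A routine check shows that $\pi|_{\Phi_H}$ is injective, that $(\mathcal{Q}^{(d-1)}, \pi(\Phi_H))$ is a tiling of $\R^{d-1}$, and that $0 \in \pi(\Phi_H)$; moreover, for every $v' \in \{0,1\}^{d-1}$, the integer point $(0, v') \in \{0,1\}^d$ satisfies $\phi_{(0, v')} \in \Phi_H$ with $\pi(\phi_{(0, v')}) = \phi'_{v'}$ in the sliced tiling. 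Applying the inductive hypothesis (still valid since $d - 1 \le 6$) yields $v'_1, \ldots, v'_{d-1}, w'_1, \ldots, w'_{d-1} \in \{0,1\}^{d-1}$ together with a permutation $\sigma'$ of $\{1, \ldots, d-1\}$ realizing the triangular structure for the corresponding $(d-1)$-dimensional lattice $L'$.

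Lifting via $v_{j+1} := (0, v'_j), w_{j+1} := (0, w'_j)$ for $j = 1, \ldots, d-1$, the $d \times d$ matrix whose columns are $e_1, \phi_{v_2} - \phi_{w_2}, \ldots, \phi_{v_d} - \phi_{w_d}$ is block upper-triangular with a $1$ in the top-left entry and the generator matrix of $L'$ as the lower $(d-1) \times (d-1)$ block, so $L := \sum_{j=1}^d \Z(\phi_{v_j} - \phi_{w_j})$ has covolume $1$; disjointness $L \cap (-1, 1)^d = \{0\}$ follows by projecting to the tiling $(\mathcal{Q}^{(d-1)}, L')$, hence $(\mathcal{Q}, L)$ tiles. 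Extending $\sigma'$ to a permutation of $\{1, \ldots, d\}$ fixing coordinate $1$ and composing with the initial swap yields the required reordering. The chief obstacle is the slicing step: one must verify that intersecting a general (non-lattice) cube tiling with a coordinate hyperplane produces a bona fide $(d-1)$-dimensional cube tiling on which the induction can be applied, and that the lifted generators reassemble into a $d$-dimensional lattice tiling with the correct covolume and non-overlap properties; once this is in place, the triangular coordinate structure follows by bookkeeping.
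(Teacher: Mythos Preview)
Your proof is correct and follows essentially the same approach as the paper: induction on $d$, extracting a twin pair via Theorem~\ref{thm:Minkowski-Hajos}, slicing by the orthogonal hyperplane to obtain a $(d{-}1)$-dimensional cube tiling, and then lifting the lattice provided by the inductive hypothesis and verifying the resulting $d$-dimensional lattice tiles via the determinant-plus-disjointness criterion (the paper's Lemma~\ref{lem:equivalencetiling}). The only differences are cosmetic: you place the twin direction in the \emph{first} coordinate and obtain a block \emph{upper}-triangular generator matrix, while the paper places it in the \emph{last} coordinate and obtains a block \emph{lower}-triangular one; you also spell out explicitly that $v_1-w_1=e_1$ and the bookkeeping for condition~\eqref{eqn:existsLatticeb}, which the paper leaves largely implicit.
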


Note that  ``$\ref{list:thm1part1}$ implies $\ref{list:thm1part3}$'' in Theorem~\ref{thm:main} in the case $d\leq 7$ follows directly from Theorem~\ref{thm:d-up-to-7-construct-lattice} as $\Phi\subseteq A\Z^d$ implies $(\phi_{v_1}-\phi_{w_1})\Z+\ldots + (\phi_{v_d}-\phi_{w_d})\Z\subseteq A\Z^d$. Further, the restriction $\phi_{(0,\ldots,0)}=(0,\ldots,0)$ is not consequential as we can always achieve this by replacing $\Phi$ with $\Phi-\phi$ for any choice of $\phi\in\Phi$.

  \begin{remark}\label{rmk:d-leq-7-cannot-be-improved-continued}
  \rm
  Note that \eqref{eqn:existsLatticea} and \eqref{eqn:existsLatticeb} in 
Theorem~\ref{thm:d-up-to-7-construct-lattice} imply together that $\mathcal{Q}+\phi_{v_d}$ and $\mathcal{Q}+\phi_{w_d}$ are twin cubes, as  \eqref{eqn:existsLatticeb} implies that $\phi_{v_d}-\phi_{w_d}=e_i$ for some $i \in \{ 1,\ldots,d \}$. Since not every tiling for $d\geq 8$ contains twin cubes, Theorem~\ref{thm:d-up-to-7-construct-lattice} does not hold for $d\geq 8$.

Nonetheless, the conclusion containing only \eqref{eqn:existsLatticea} may hold for $d\geq 8$.  It is worth noting, that the examples that show that $d\leq 7$ in Theorem~\ref{thm:Minkowski-Hajos}, part $(a)$, cannot be improved, i.e., the examples provided for $d=8$ by Mackey in \cite{Ma02} and by Lagarias and Shor for $d=10,12$ in \cite{LS92},  are likely not counterexamples to the conclusion \eqref{eqn:existsLatticea}.  Indeed, the constructed tilings satisfy $\Phi\subseteq \frac 1 2 \Z^d$, and the standard lattice tiling $(\mathcal{Q}, \Z^d)$ satisfies $\Z^d \subseteq \frac{1}{2} \Z^d$. Certainly, it is still possible that the constructed $\Phi\subseteq A \Z^d$ for some non-trivial full rank matrix $A$.
\end{remark}
Remark~\ref{rmk:d-leq-7-cannot-be-improved-continued} identifies the following open  problems.
\begin{Question} ~
\begin{enumerate}[i.]
    \item Given a tiling pair $(\mathcal{Q},\Phi)$ for $\R^d$  with $\Phi\subseteq A\Z^d$ and  $d\geq 8$, does there exist a lattice $\Lambda\subseteq A\Z^d$ so that $(\mathcal{Q},\Lambda)$ is a tiling pair as well?  
    \item For those $d$ for which the question above is answered positively, the missing direction ``$\ref{list:thm1part3}$ implies $\ref{list:thm1part1}$'' in Theorem~\ref{thm:main} holds. 
    But if the above is not confirmed for all $d$, can the restriction $d\leq 7$ in Theorem~\ref{thm:main} be nonetheless relaxed, or omitted entirely? 
\end{enumerate}
    \end{Question}

    \medskip

Note that if $(\mathcal{Q}, \Lambda)$ is a lattice tiling with $\Lambda\subseteq A\Z^d$ or if $(\mathcal{Q}, \Phi)$ is a generic tiling with $\Phi\subseteq A\Z^d$ and $d\leq 7$, then Theorem~\ref{thm:main} implies $1=\det PAR=\det A \, \det R$ and, since $R$ is an integer matrix, we have $|\det A|=\frac 1 N$ for some $N\in\N$. In an early  arXiv version of this article, we  asked the question whether the conclusion $|\det A|=\frac 1 N$ for $N\in\N$ holds in the non-lattice case also for $d\geq 8$. Mihalis Kolountzakis confirmed this by showing
\begin{theorem}[Kolountzakis]\label{thm:Kolountzakis}
    If $(S,\Phi)$ is a tiling pair with $\Phi\subseteq \Z^d$, then the volume of $S \subseteq \R^d$ is a natural number. Consequently, if $(\mathcal Q,\Phi)$ is a tiling pair with $\Phi\subseteq A\Z^d$, then $|\det A|=1/N$ for some $N\in\N$.
\end{theorem}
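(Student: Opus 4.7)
The plan is to fold $S$ down to the unit cube $\mathcal{Q}$ via the quotient $\R^d \to \R^d/\Z^d$ and to show that the resulting multiplicity function is almost everywhere equal to a fixed natural number. For $x \in \mathcal{Q}$ define
\[
   S_x \;:=\; \{ n \in \Z^d : x + n \in S \} \subseteq \Z^d, \qquad F(x) \;:=\; |S_x|.
\]
Since $(\mathcal{Q},\Z^d)$ is a tiling, Tonelli yields $\int_{\mathcal{Q}} F(x)\,dx = |S|$, so, assuming $|S|<\infty$ (the only case in which the conclusion is non-vacuous), it suffices to prove that $F(x)$ is a.e.\ equal to a fixed positive integer.

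The key step is to show that for a.e.\ $x \in \mathcal{Q}$ the pair $(S_x,\Phi)$ tiles $\Z^d$, meaning that every $n \in \Z^d$ admits a unique decomposition $n = m + \varphi$ with $m \in S_x$ and $\varphi \in \Phi$. This is just the tiling $(S,\Phi)$ of $\R^d$ read along the coset $x + \Z^d$: for a.e.\ $y \in \R^d$ there is a unique $\varphi \in \Phi$ with $y - \varphi \in S$, and applied to $y = x + n$ the integrality $n,\varphi \in \Z^d$ forces $m := n - \varphi \in \Z^d$ with $x+m\in S$, i.e.\ $m \in S_x$. The exceptional null set associated with each individual $n$ aggregates, by countability of $\Z^d$, into a single null set in $\mathcal{Q}$.

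Given the fiberwise tiling $(S_x,\Phi)$ of $\Z^d$ with $|S_x|=k<\infty$, a direct point-count in $[-N,N]^d$ together with a boundary estimate yields $k \cdot d(\Phi) = 1$, where $d(\Phi)$ denotes the natural density of $\Phi$ in $\Z^d$; in particular $d(\Phi)$ exists and $F(x) = k = 1/d(\Phi)$ is the same positive integer for a.e.\ $x \in \mathcal{Q}$. Integrating gives $|S| = k \in \N$. The consequence for $|\det A|$ then follows by the linear change of variables $A^{-1}$: the tiling $(\mathcal{Q},\Phi)$ with $\Phi \subseteq A\Z^d$ is carried to the tiling $(A^{-1}\mathcal{Q},A^{-1}\Phi)$ with $A^{-1}\Phi \subseteq \Z^d$, forcing $|\det A|^{-1} = |A^{-1}\mathcal{Q}| \in \N$, i.e.\ $|\det A| = 1/N$ for some $N \in \N$.

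The main technical subtlety I expect to have to handle carefully is the passage between the a.e.\ tiling identity for $(S,\Phi)$ on $\R^d$ and the pointwise tiling of $\Z^d$ by $(S_x,\Phi)$ along a fixed fiber; this is routine once one invokes Fubini and uses the countability of $\Z^d$ to aggregate the fiberwise exceptional sets. Everything else reduces to standard density bookkeeping on $\Z^d$.
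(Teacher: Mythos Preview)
Your proof is correct and follows essentially the same idea as the paper's: restrict the tiling $(S,\Phi)$ to an integer coset to obtain a tiling of $\Z^d$ by a finite set, then read off $|S|$ as the reciprocal of the density of $\Phi$. The only cosmetic difference is that the paper picks a single good translate of $S$ so that $K=S\cap\Z^d$ works directly and then uses the two density identities $D(\Phi)=1/|K|$ and $|S|=1/D(\Phi)$, whereas you average over all fibers $S_x$ and integrate; both routes are equivalent and your handling of the a.e.\ issues is clean.
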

 Kolountzakis'  proof of Theorem~\ref{thm:Kolountzakis} is included  in Section~\ref{sec:Kolountzakis}. 



\medskip

Cube tilings are closely related to orthogonal exponential bases for spaces of square integrable functions.  
In fact, $(\mathcal{Q},\Gamma)$ is a tiling pair if and only if 
it is a so called \emph{spectral pair}, that is, $\mathcal{E}(\Gamma) = \{e^{2\pi i \gamma \cdot (\cdot)} : \gamma \in\Gamma\}$ is an orthogonal bases for  $L^2(\mathcal{Q})$ \cite{JP99,Ko00,LRW00}. In this setting, we refer to the discrete set  $\Gamma$ as  \emph{frequency set} or  \emph{spectrum}, and the set $\mathcal{Q}$ is called a \emph{domain}. Note that this equivalence is easily verified if $\Gamma$ is a lattice, but is highly nontrivial in the non-lattice case.



The relationship of tiling and spectral pairs, together with a change of variable argument, provides the following consequence of Theorem~\ref{thm:main}.

\begin{corollary}\label{cor:main-application-to-expRB}
For nonsingular matrices $A,B \in \R^{d \times d}$, the following are equivalent:
\begin{enumerate}[i.]
     \item\label{list:thm1part12} there exists a lattice $\Lambda \subseteq A\Z^d$ with $\mathcal E(\Lambda)$ being an orthogonal basis for $L^2(B\mathcal{Q})$;
     \item\label{list:thm1part22} there exist a permutation matrix $P \in \{0,1\}^{d \times d}$ and a nonsingular integer matrix $R \in \Z^{d \times d}$ with $PB^TAR$ being unitriangular; 
     \item[{\romannumeral 2${.}^\prime$\!}]\label{list:thm1part22-prime} 
     there exist a nonsingular integer matrix $R \in \Z^{d \times d}$ and a unitriangular matrix $G \in \R^{d \times d}$ with $B\mathcal{Q} = A^{-T} R^{-1} G \mathcal{Q}$.   
\end{enumerate} 
The unitriangularity condition in 
({\romannumeral 2}) and ({\romannumeral 2${}^\prime$\!}) can be replaced with lower unitriangularity or upper unitriangularity. 

For $d\leq 7$, the statements above are also equivalent to: 
 \begin{enumerate}[i.]
 \setcounter{enumi}{2}
     \item\label{list:thm1part32} there exists a set $\Phi \subseteq A\Z^d$ with $\mathcal E(\Phi)$ being an orthogonal basis for $L^2(BQ)$.
\end{enumerate}   
\end{corollary}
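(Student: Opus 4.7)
The plan is to reduce everything to Theorem~\ref{thm:main} via the spectral-tiling equivalence combined with a linear change of variables.

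First I would invoke the equivalence cited immediately before the corollary: $(\mathcal{Q},\Gamma)$ is a tiling pair if and only if $\mathcal{E}(\Gamma)$ is an orthogonal basis for $L^2(\mathcal{Q})$. To transport this to the parallelepiped $B\mathcal{Q}$, I substitute $x=By$ in the inner product and observe that
\[
\big\langle e^{2\pi i \varphi\cdot x},\, e^{2\pi i \varphi'\cdot x}\big\rangle_{L^2(B\mathcal{Q})}
= |\det B|\,\big\langle e^{2\pi i (B^T\varphi)\cdot y},\, e^{2\pi i (B^T\varphi')\cdot y}\big\rangle_{L^2(\mathcal{Q})},
\]
and that $g\mapsto g(B^{-1}\,\cdot\,)/|\det B|^{1/2}$ is a unitary isomorphism $L^2(\mathcal{Q})\to L^2(B\mathcal{Q})$ sending $\mathcal{E}(B^T\Phi)$ to a unimodular rescaling of $\mathcal{E}(\Phi)$. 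Both orthogonality and completeness therefore transfer, so $\mathcal{E}(\Phi)$ is an orthogonal basis for $L^2(B\mathcal{Q})$ exactly when $(\mathcal{Q},B^T\Phi)$ is a tiling pair.

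Next, I would note that $\Phi\subseteq A\Z^d$ is equivalent to $B^T\Phi\subseteq(B^TA)\Z^d$, and that $\Phi$ is a lattice if and only if $B^T\Phi$ is one. Applying Theorem~\ref{thm:main} with the matrix $A$ there replaced by $A':=B^TA$ then yields the equivalence of \ref{list:thm1part12} and \ref{list:thm1part22} for all $d\in\N$, and additionally of \ref{list:thm1part32} for $d\leq 7$. The upper-versus-lower unitriangularity option in \ref{list:thm1part22} is inherited directly from the corresponding footnote in Theorem~\ref{thm:main}.

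Finally, for the geometric reformulation \ref{list:thm1part22-prime}, the key fact is that $T\mathcal{Q}=\mathcal{Q}$ if and only if $T$ is a permutation matrix, since any such $T$ must permute the vertex set $\{0,1\}^d$ and fix the origin. Consequently, $B\mathcal{Q}=A^{-T}R^{-1}G\mathcal{Q}$ is equivalent to the existence of a permutation matrix $P$ with $G^{-1}RA^TB=P$, i.e., $RA^TBP^T=G$. Transposing and renaming $R^T\mapsto R$, $G^T\mapsto G$ (absorbing the upper/lower swap via the footnote) yields $PB^TAR=G$, which is exactly \ref{list:thm1part22}. I do not expect any genuine obstacle beyond this bookkeeping with transposes and triangularity: all nontrivial tiling-theoretic content is already packaged inside Theorem~\ref{thm:main} and the classical spectral-tiling equivalence for the unit cube.
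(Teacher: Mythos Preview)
Your proposal is correct and follows essentially the same route as the paper: reduce to Theorem~\ref{thm:main} applied to $B^TA$ via the spectral--tiling equivalence for the unit cube together with the change of variables $x=By$, and handle ({\romannumeral 2}$'$) through the fact that $T\mathcal{Q}=\mathcal{Q}$ forces $T$ to be a permutation matrix. The paper packages your explicit change-of-variables computation into Lemmas~\ref{lem:OB-basic-operations} and~\ref{lem:equiv-spectral-tiling}, and your transpose bookkeeping for ({\romannumeral 2})$\Leftrightarrow$({\romannumeral 2}$'$) matches the paper's almost line for line.
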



\medskip

The paper is structured as follows. 
Section~\ref{sec:background} provides some background on cube tilings and on exponential bases. Before proving our results in later sections, we consider the case $d=4$
in Section~\ref{sec:examples} to enable  the reader to build some intuition on the problem at hand. 
Sections~\ref{sec:proof-thm-main}, \ref{sec:proof-thm-d-up-to-7-construct-lattice} and \ref{sec:proof-cor-main-application-to-expRB} are devoted to the proof of Theorem~\ref{thm:main}, Theorem~\ref{thm:d-up-to-7-construct-lattice} and Corollary~\ref{cor:main-application-to-expRB}, respectively. Section~\ref{sec:Kolountzakis} contains Kolountzakis' proof of Theorem~\ref{thm:Kolountzakis}.

\section{Background}
\label{sec:background}

Before proceeding to prove our results, we give some related background on cube tilings and complex exponential bases.

\medskip
\noindent
{\it Cube tilings}. \

The study of tilings, particularly within the context of the classification of planar wallpaper patterns, began in the early 19th century. This led to the development of a rich theory on symmetry groups and further contributed to advancements in, for example, crystallography. 
Due to the simple structure of cubes, cube tilings are well suited to provide insights into the increasingly complex geometry of high dimensional Euclidean space. 

The canonical cube tiling of $\R^d$ is given by $(\mathcal{Q},\Z^d)$.
A wide class of cube tilings can be obtained by shifting some rows or columns in the tiling $(\mathcal{Q},\Z^d)$:
  
For $d=2$, a global shift can be combined with  either shifting each column independently of each other and obtain $\Phi=\{(n+c,\alpha(n)+m) : n,m\in\Z \}$ or each row independently of each other and obtain $\Phi=\{(\alpha(m)+n,m+c) : n,m\in\Z \}$ with $c\in\R$ and $\alpha:\Z\to\R$. It is easily checked that these are all the cube tilings of $\R^2$.

For $d=3$, we can obtain a cube tiling where shifts in each coordinate direction have taken place, see Figure~\ref{fig:Nir}. But there are cube tilings which cannot be obtained by shifts of subsets into coordinate directions.  As $d$ increases, the situation becomes increasingly cumbersome, see, e.g., \cite{Zo05}.

In the early 20th century, the interest in cube tilings was enhanced by the Minkowski and the Keller conjecture. Minkowski observed that in low dimensions, any lattice cube tiling contains twin cubes, that is, a pair of cubes in a $d$-dimensional cube tiling that share a $(d{-}1)$-dimensional face. 
Minkowski consequently conjectured that this result holds for all $d$ \cite{Mi1907}.
This conjecture  was  confirmed later by Haj{\'o}s \cite{Ha42}. In the mean time, Keller\footnote{In fact, Keller stated a stronger statement that every cube tiling has a column of cubes all meeting face-to-face.} had extended Minkowski's conjecture by postulating that the conclusion holds also for any non-lattice  tiling pair $(\mathcal{Q},\Phi)$ \cite{Keller1930}. Perron confirmed this for $d\leq 6$ \cite{Pe40a,Pe40b}.
However, Keller's conjecture was shown to be false for $d\geq 10$ by Lagarias and Shor \cite{LS92} and for $d \geq 8$ by Mackey \cite{Ma02}. The last open case $d=7$ was settled only recently with a computer-assisted proof that involved a dataset of 200 gigabytes \cite{BHMN20}. In summary, Keller's conjecture holds if and only if $d\leq 7$.  

Certainly, due to the simple structure of a cube, the study of cube tilings is not representative of the study of tilings with sets. To illustrate this, let us point to a recent paper of Greenfeld and Tao where a set $\Sigma \subset \R^d$ is constructed to disprove the periodic tiling conjecture \cite{GT22}. That is, for  $\Sigma \subset \R^d$ there exists a discrete set $\Phi \subset \R^d$ such that $(\Sigma,\Phi)$ indeed tiles $\R^d$ a.e.\footnote{Here, the tiling conditions are given up to a set of Lebesgue measure 0.} but no periodic tiling set exists in the sense that for every lattice $\Lambda \subset \R^d$ and for any choice of finitely many points $a_1,\ldots,a_n \in \R^d$, $(\Sigma, \Lambda \cup (\Lambda{+}a_1) \cup \ldots \cup (\Lambda{+}a_n) )$ does not tile $\R^d$ a.e. Note that the dimension $d$ here is not explicitly given, but described as enormous.


For additional reading on cube and generic tilings, see e.g., 
\cite{GT21,GT23,GK23,Ki13,Ki20,MOP13,KM06,GS87,LW97}.

\bigskip

\noindent
{\it Fourier and exponential bases}. \

Fourier bases, composed of complex exponentials with frequencies that are integer multiples of a base frequency, are used extensively in various fields of science, mathematics and engineering.  
Exponential bases are a generalization of the Fourier bases; they allow for a flexible choice of frequencies \cite{Yo01}. 
Below we provide a short review on some to this paper related problems and results Fourier and exponential bases.

Fuglede's conjecture \cite{Fu74} states that if $S \subseteq \R^d$ is a finite measure set, then the space $L^2(S)$ has an orthogonal basis $\mathcal{E}(\Gamma)$ with some $\Gamma \subseteq \R^d$
if and only if there exists a discrete set $\Omega \subseteq \R^d$ such that $(S, \Omega)$ is a tiling pair for $\R^d$ up to a measure zero set. 
This conjecture is shown to be false for $d \geq 3$, but it remains open for $d = 1,2$.
However, the conjecture is known to be true in some special cases (see \cite{LM19} and the references therein).
For example, if $\Gamma$ is a lattice of $\R^d$ \cite{Fu74}, or if $S \subseteq \R^d$ is a convex set of finite positive measure \cite{LM19}, then the conjecture holds in all dimensions.
In particular, if $S$ is the Euclidean unit ball in $\R^d$ with $d \geq 2$, then no exponential orthogonal basis exists for $L^2(S)$ \cite{IKP99}, which is in contrast with the $1$-dimensional case where $\mathcal{E}(\frac{1}{2}\Z)$ is clearly an orthogonal basis for $L^2 [-1,1]$. 
Also, for any discrete set $\Gamma \subseteq \R^d$, the system $\mathcal{E}(\Gamma)$ is an orthogonal basis for $L^2(\mathcal{Q})$ if and only if $(\mathcal{Q},\Gamma)$ is a tiling of $\R^d$; in short, $(\mathcal{Q},\Gamma)$ is a spectral pair if and only if it is a tiling pair \cite{JP99}. 
Moreover, the cube $\mathcal{Q}$ in this statement can be replaced with any open set of measure $1$ \cite{Ko00}. 
When $S = A \mathcal{Q}$ with $A \in \mathrm{GL} (d,\R)$, we have that for any discrete set $\Gamma \subseteq \R^d$, the system $\mathcal{E}(\Gamma)$ is an orthogonal basis for $L^2(S)$ if and only if 
 $(A^{-T} \mathcal{Q}, \Gamma)$ is a tiling of $\R^d$ \cite{LRW00}.




In the case of Riesz bases which is a natural generalization of orthogonal bases, the following results are worth mentioning in our context. 
For any interval $I \subseteq [0,1)$, there exists a set $\Gamma \subseteq \Z$ such that $\mathcal{E}(\Gamma)$ is a Riesz basis for $L^2(I)$ \cite{Se95}. If $S$ is a finite union of disjoint intervals in $[0,1)$, then there exists a set $\Gamma \subseteq \Z$ such that $\mathcal{E}(\Gamma)$ is a Riesz basis for $L^2(S)$ \cite{KN15}.
However, there are only a few classes of sets for which exponential Riesz bases are known to exist, see e.g., \cite{CC18,CL22,DL19,Ko15,KN15,Le22,LPW23,PRW21}.
Although most of the sets in $\R^d$ are generally believed to have exponential Riesz bases, there is a specific bounded measurable set $S \subseteq \R$ such that no exponential Riesz basis exists for $L^2(S)$ \cite{KNO21}.

\section{Construction of a lattice tiling from a generic tiling in $d=4$}
\label{sec:examples}

In this section,  we consider the case $d=4$ and show that ``$\ref{list:thm1part3}$ implies  $\ref{list:thm1part1}$''  in Theorem~\ref{thm:main} by first principles, mainly relying on Keller's theorem  \cite{Keller1930}:
\begin{theorem}\label{thm:keller}
If $(Q,\Phi)$ is a tiling pair, then for every distinct elements $\phi,\phi'\in\Phi$ their difference $\phi-\phi'$ has a nonzero integer coordinate. 
\end{theorem}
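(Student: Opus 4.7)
I would prove this classical theorem by induction on the dimension $d$, using the ``column lemma'' as the key tool. The column lemma asserts that for any axis-parallel line $L \subset \R^d$ in the direction $e_j$, the set $\{\phi_j : \phi \in \Phi,\ (\mathcal{Q}+\phi) \cap L \neq \emptyset\}$ is a coset of $\Z$, since the intersections $L \cap (\mathcal{Q}+\phi)$ are half-open intervals of length exactly $1$ that must tile $L$. The base case $d=1$ is immediate: $\Phi$ itself is a coset of $\Z$. For the inductive step, let $\phi,\phi' \in \Phi$ be distinct and set $\alpha = \phi-\phi'$. The easy case is when some $|\alpha_j| < 1$: for any $c \in [\phi_j,\phi_j{+}1) \cap [\phi'_j,\phi'_j{+}1)$ the hyperplane $\{x_j = c\}$ meets both cubes, and the induced $(d{-}1)$-dimensional cube tiling of this hyperplane contains the projected cubes corresponding to $\phi$ and $\phi'$ (with their $j$-th coordinates dropped); the inductive hypothesis then produces a nonzero integer coordinate of $\alpha$.

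The main obstacle is the hard case in which $|\alpha_j| \geq 1$ for every $j$. My plan is to use the column lemma to produce an intermediate cube $\phi'' \in \Phi$: applying the lemma to the axis-$e_1$ line through $\phi'$ yields $\phi'' \in \Phi$ on this line with $\phi''_1 - \phi'_1 = n$, where $n$ is the integer nearest to $\alpha_1$, so that $|\phi_1 - \phi''_1| \leq \tfrac{1}{2}$. The easy case then applies to the pair $(\phi,\phi'')$ and yields, via the inductive hypothesis, a nonzero integer coordinate of $\phi-\phi''$. Transferring this conclusion to $\alpha = (\phi-\phi'') + (\phi''-\phi')$ is the delicate step, since the column shift generally introduces non-integer ``errors'' $\phi''_k - \phi'_k \in (-1,0]$ in the auxiliary coordinates $k \neq 1$; iterating the construction across different coordinates (or equivalently, combining the column lemma in several directions) is needed to either force this error to vanish in the critical coordinate or reach a contradiction with the global tiling structure.

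As a cleaner alternative, I would observe that Theorem~\ref{thm:keller} is equivalent to the pairwise orthogonality of the exponential system $\mathcal{E}(\Phi) = \{e^{2\pi i \phi \cdot x} : \phi \in \Phi\}$ in $L^2(\mathcal{Q})$, since the inner product
\[
\int_{[0,1)^d} e^{2\pi i (\phi-\phi')\cdot x}\,dx \;=\; \prod_{j=1}^d \int_0^1 e^{2\pi i (\phi_j-\phi'_j) x_j}\,dx_j
\]
vanishes if and only if at least one factor vanishes, which happens precisely when $\phi_j - \phi'_j \in \Z \setminus \{0\}$ for some $j$. This orthogonality in turn follows from the tiling identity $\sum_{\phi \in \Phi} \chi_\mathcal{Q}(\cdot - \phi) = 1$ via the cube/spectral equivalence reviewed in Section~\ref{sec:background}, giving a proof that bypasses the delicate hard case entirely.
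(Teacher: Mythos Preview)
The paper does not prove Theorem~\ref{thm:keller}; it is quoted as Keller's classical result \cite{Keller1930} and then used as a tool in Section~\ref{sec:examples}. So there is no paper proof to compare against, and I assess your proposal on its own merits.

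Your inductive framework and the easy case are correct: if some $|\alpha_j|<1$, slice by $\{x_j=c\}$, observe that the two projected cubes are distinct (otherwise the original cubes would overlap), and apply the $(d{-}1)$-dimensional statement. The genuine gap is the hard case. After the column move from $\phi'$ to $\phi''$ you only know $\phi''_k-\phi'_k\in(-1,1)$ for $k\neq 1$; the nonzero integer coordinate $k\neq 1$ of $\phi-\phi''$ produced by the easy case transfers to $\phi-\phi'$ only if $\phi''_k=\phi'_k$, which is not guaranteed. Your suggested fix---``iterating the construction across different coordinates''---does not close this: every further column move perturbs the remaining coordinates by fresh amounts in $(-1,1)$, and you have supplied no monovariant forcing termination. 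Keller's and Perron's original arguments handle this with a more careful combinatorial reduction that is absent from your sketch.

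Your alternative route via the tiling/spectral equivalence is circular. You correctly note that Keller's theorem is \emph{equivalent} to the pairwise orthogonality of $\mathcal{E}(\Phi)$ in $L^2(\mathcal{Q})$, but that is precisely the point: the orthogonality half of ``$(\mathcal{Q},\Phi)$ tiling $\Rightarrow$ $(\mathcal{Q},\Phi)$ spectral'' \emph{is} Keller's theorem, and the proofs cited in Section~\ref{sec:background} (in particular \cite{LRW00}) invoke it for exactly that step. Appealing to the equivalence therefore assumes what you are trying to prove; it does not bypass the hard case.
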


In this section, for brevity of notation, we shall write elements of $\R^4$ as row vectors.

\medskip
Let  now $([0,1)^4,\Phi)$ be a generic tiling pair with $(0,0,0,0)\in \Phi$. 
As $4 \leq 7$, Keller's conjecture for dimension $d \leq 7$ (see Theorem~\ref{thm:Minkowski-Hajos} and Section~\ref{sec:background}) shows that there exist $\phi,\psi \in \Phi$ with $\phi-\psi=e_j$ for some $j\in\{1,2,3,4\}$. Without loss of generality, let us assume $j=d=4$. 

We now choose $\gamma,\xi,\eta,\alpha,\beta,\tau\in\Phi$ with
\begin{align*}
\begin{array}{rrrrrrrrrrr}
 (1,0,0,0)&\in &[0,1)^4&{+}& \gamma, \quad \gamma &=(&1-\gamma_1,&-\gamma_2,&-\gamma_3,&-\gamma_4&),\\
   (0,1,0,0)&\in &[0,1)^4&{+}& \xi, \quad \xi&=(&-\xi_1,&1-\xi_2,&-\xi_3,&-\xi_4&),\\
  (0,0,1,0)&\in &[0,1)^4&{+}& \eta, \quad  \eta&=(&-\eta_1,&-\eta_2,&1-\eta_3,&-\eta_4&),\\
   (1,1,0,0)&\in & [0,1)^4&{+}& \alpha, \quad \alpha &=(&1-\alpha_1,&1-\alpha_2,&-\alpha_3,&-\alpha_4&),\\
   (1,0,1,0)&\in & [0,1)^4&{+}& \beta, \quad \beta&=(&1-\beta_1,&-\beta_2,&1-\beta_3,&-\beta_4&),\\
  (0,1,1,0)&\in & [0,1)^4&{+}& \tau, \quad  \tau&=(&-\tau_1,&1-\tau_2,&1-\tau_3,&-\tau_4&),
\end{array} 
\end{align*}
where $\gamma_i,\xi_i,\eta_i,\alpha_i,\beta_i,\tau_i \in [0,1)$ for $i=1,2,3,4$.  Applying Keller's theorem, Theorem~\ref{thm:keller}, to  compare $(0,0,0,0)\in\Phi$ with the vectors above implies immediately
\begin{align*}
    & \gamma_1=0,\ \xi_2=0, \ \eta_3=0, \\  
    & \alpha_1=0\text{ or }\alpha_2=0,  \\   
    & \beta_1=0\text{ or }\beta_3=0, \\  
    &\tau_2=0\text{ or }\tau_3=0.
\end{align*}
The `or' conjugation implies that \emph{a-priori} we have to consider 8 cases.  But these reduce to the following two.

\vspace{.5cm}

\def\cube at (#1,#2,#3){\pgfmathsetmacro{\cubex}{1}
\pgfmathsetmacro{\cubey}{1}
\pgfmathsetmacro{\cubez}{-1}
\begin{scope}[shift={(#1,#2,#3)}]

\draw[black] (0,0,\cubez) -- ++(\cubex,0,0) -- ++(0,\cubey,0) -- ++(-\cubex,0,0) -- cycle;
\draw[black] (0,0,\cubez)-- ++(\cubex,0,0)-- ++(0,0,-\cubez) -- ++(-\cubex,0,0) -- cycle;
\draw[black] (0,0,\cubez) -- ++(0,\cubey,0) -- ++(0,0,-\cubez) -- ++(0,-\cubey,0) -- cycle;

\draw[black,fill=white!80] (\cubex,\cubey,0) -- ++(-\cubex,0,0) -- ++(0,-\cubey,0) -- ++(\cubex,0,0) -- cycle;
\draw[black,fill=white!80] (\cubex,\cubey,0)-- ++(-\cubex,0,0)-- ++(0,0,\cubez) -- ++(\cubex,0,0) -- cycle;
\draw[black,fill=white!80] (\cubex,\cubey,0) -- ++(0,-\cubey,0) -- ++(0,0,\cubez) -- ++(0,\cubey,0) -- cycle;

\end{scope}}
\def\cubeColor at (#1,#2,#3){\pgfmathsetmacro{\cubex}{1}
\pgfmathsetmacro{\cubey}{1}
\pgfmathsetmacro{\cubez}{-1}
\begin{scope}[shift={(#1,#2,#3)}]

\draw[blue] (0,0,\cubez) -- ++(\cubex,0,0) -- ++(0,\cubey,0) -- ++(-\cubex,0,0) -- cycle;
\draw[blue] (0,0,\cubez)-- ++(\cubex,0,0)-- ++(0,0,-\cubez) -- ++(-\cubex,0,0) -- cycle;
\draw[blue] (0,0,\cubez) -- ++(0,\cubey,0) -- ++(0,0,-\cubez) -- ++(0,-\cubey,0) -- cycle;

\draw[blue,fill=white!80] (\cubex,\cubey,0) -- ++(-\cubex,0,0) -- ++(0,-\cubey,0) -- ++(\cubex,0,0) -- cycle;
\draw[blue,fill=white!80] (\cubex,\cubey,0)-- ++(-\cubex,0,0)-- ++(0,0,\cubez) -- ++(\cubex,0,0) -- cycle;
\draw[blue,fill=white!80] (\cubex,\cubey,0) -- ++(0,-\cubey,0) -- ++(0,0,\cubez) -- ++(0,\cubey,0) -- cycle;

\end{scope}}

\begin{figure}[t]
\begin{center}
\begin{tikzpicture}[fill opacity=0.9, scale=2]
\newcommand{\gaaa} {0.3};
\newcommand{\xiii} {0.5};
\newcommand{\ettt} {0.7};
\cube at (0,0,0); 
\cube at (0,1,-\xiii); 
\cube at (1,-\gaaa,0); 
\cube at (-\ettt,0,1); 
\cube at (1,1-\gaaa,0); 
\cube at (1-\ettt,0,1);
\cube at (0,1,1-\xiii);

\draw[black] (1,1,0) -- (1,2-\gaaa,0) -- (1,2-\gaaa,-1);
\draw[white!90] (1,1,0) -- (1,1,-\xiii) -- (1,2-\gaaa,-\xiii);
\draw[black] (1,2-\gaaa,0) -- (2,2-\gaaa,0);


\fill[red] (0,0,0) circle (0.2mm); \node[above] at (0.1,-.05,0) {\tiny (\!0\!,0\!,0\!,0\!)};


\end{tikzpicture}
    \end{center}
    \caption{
    We display the projection of  the cubes $[0,1)^4$, $[0,1)^4+\gamma$, $[0,1)^4+\xi$, $[0,1)^4+\eta$, $[0,1)^4+\alpha$, $[0,1)^4+\beta$, $[0,1)^4+\tau$ along $e_4$. This scenario in the case of $d=3$ is achieved, for example, by starting with the canonical tiling  $([0,1)^4,\Z^4)$ and then shifting disjoint columns -- one in $x$ direction, one in $y$ direction, and one in $z$ direction -- that neighbor the origin in $x$ neighboring by a non integer.
}
\label{fig:Nir}
\end{figure}
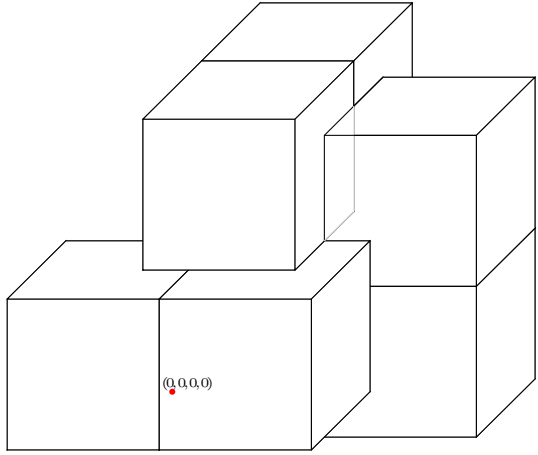

\noindent \emph{Case 1.} In at least one of the first three coordinates, three of the six vectors above contain a $1$.

Without loss of generality, suppose this is the case for the first coordinate so that $\alpha_1=\beta_1=0$.
We define the lattice 
$$
\Lambda= (\gamma-0) \Z + (\alpha-0) \Z + (\beta-0) \Z+ (\psi-\phi)\Z=\gamma \Z + \alpha \Z + \beta \Z+ e_4\Z .
$$

To show that  $([0,1)^4,\Lambda)$ is  a tiling pair, we start with some additional observations based on Keller's theorem. 
As  $\gamma-\alpha$ must have a non zero integer component, we must have $\gamma_2=\alpha_2$. Similarly  $\gamma-\beta$ having an integer component implies $\gamma_3=\beta_3$, and the same for $\alpha-\beta$ implies $\alpha_2=\beta_2$ or $\alpha_3=\beta_3$.  

If $\alpha_2=\beta_2$, then
\begin{align*}
\begin{array}{clccccc}
    \gamma&=(&1,&-\gamma_2,&-\gamma_3,&-\gamma_4&),\\
    \alpha&=(&1,&1-\gamma_2,&-\alpha_3,&-\alpha_4&),\\
    \beta&=(&1,&-\gamma_2,&1-\gamma_3,&-\beta_4&), 
\end{array}
\end{align*}
and if $\alpha_3=\beta_3$
\begin{align*}
\begin{array}{clccccc}
    \gamma&=(&1,&-\gamma_2,&-\gamma_3,&-\gamma_4&),\\
    \alpha&=(&1,&1-\gamma_2,&-\gamma_3,&-\alpha_4&),\\
    \beta&=(&1,&-\beta_2,&1-\gamma_3,&-\beta_4&).
\end{array} 
\end{align*}

Now suppose that $\big( [0,1)^4{+}\lambda \big) \cap \big( [0,1)^4{+}\lambda' \big) \neq \emptyset$ for some $\lambda,\lambda'\in\Lambda$. By simple translation, we can assume that $\lambda'=0$. Since $\big( [0,1)^4{+}\lambda \big) \cap [0,1)^4 \neq \emptyset$ we have
\begin{align*}
   \lambda&= k\gamma+\ell \alpha+m\beta+ne_4\in (-1,1)^4. 
   \end{align*}
  If $\alpha_2=\beta_2$, then
   \begin{align*}
   \begin{array}{llll}
       k+\ell+m &\in (-1,1) &\text{ and therefore } &k+\ell+m=0,\\ 
     -\gamma_2k+(1-\gamma_2)\ell-\gamma_2m=\ell&\in (-1,1) &\text{ and therefore } & \ell=0, \ k=-m,\\
      -\gamma_3 k -\alpha_3 \ell+(1-\gamma_3)m=m&\in (-1,1)&\text{ and therefore } &k=m=0,\\ 
      -\gamma_4k-\alpha_4\ell-\beta_4m +n =n&\in (-1,1)&\text{ and therefore } &n=0,\\ 
   \end{array}
   \end{align*}
   and, hence, $\lambda=0$. The case $\alpha_3=\beta_3$ follows identically and we conclude that all multidimensional cubes  $[0,1)^4{+}\lambda$, $\lambda\in\Lambda$, are disjoint.  
   Since
   \begin{align*}
        \det&\left(\begin{array}{cccc}
     1 &-\gamma_2 &-\gamma_3 &-\gamma_4\\
     1 &1-\gamma_2&-\alpha_3&-\alpha_4\\
     1&-\gamma_2&1-\gamma_3&-\beta_4\\
     0&0&0&1
\end{array} \right)\\&=(1-\gamma_2)(1-\gamma_3)+\gamma_2\alpha_3 +\gamma_2\gamma_3 + \gamma_3(1-\gamma_2)-\gamma_2\alpha_3+ \gamma_2(1-\gamma_3)\\&
=1 -\gamma_2-\gamma_3 +\gamma_2\gamma_3 +\gamma_2\gamma_3 + \gamma_3-\gamma_3\gamma_2+ \gamma_2-\gamma_2\gamma_3=1
    \end{align*} 
     the lattice $\Lambda$ has density 1, and we conclude using Lemma~\ref{lem:equivalencetiling} below that $([0,1)^4,\Lambda)$ is a tiling pair.

Let us now consider the complementary case: 
\vspace{.5cm}

\noindent \emph{Case 2.} In each of the first three coordinates, exactly two of the six vectors above contain a $1$.

That is,
\begin{align*}
    \alpha_1=0 \text{ or } \alpha_2=0, \ \ \beta_1=0 \text{ or } \beta_3=0, \ \ \tau_2=0 \text{ or } \tau_3=0.
\end{align*}
Without loss of generality, we assume $\alpha_1=0$, $\beta_3=0$, and $\tau_2=0$, and 
$\alpha_2\neq 0$, $\beta_1\neq 0$, and $\tau_3\neq 0$ (as else, we would be in \emph{Case 1.}) With the non-zero elements in bold face 
\begin{align*}
\begin{array}{rlccccc}
 \gamma &=(&1,&-\gamma_2,&-\gamma_3,&-\gamma_4&),\\
   \xi&=(&-\xi_1,&1,&-\xi_3,&-\xi_4&),\\
    \eta&=(&-\eta_1,&-\eta_2,&1,&-\eta_4&),\\
     \alpha &=(&1,&1-\boldsymbol{\alpha}_2,&-\alpha_3,&-\alpha_4&),\\
    \beta&=(&1-\boldsymbol{\beta}_1,&-\beta_2,&1,&-\beta_4&),\\
   \tau&=(&-\tau_1,&1,&1-\boldsymbol{\tau}_3,&-\tau_4&). 
\end{array} 
\end{align*}
Applying Keller's theorem to $\xi-\alpha$ yields $\xi_1=0$, 
applying it to $\gamma-\beta$ gives $\gamma_3=0$, 
and applying it to $\eta-\tau$ gives $\eta_2=0$, that is, 
\begin{align*}
\begin{array}{clccccc}
 \gamma &=(&1,&-\gamma_2,&0,&-\gamma_4&),\\
   \xi&=(& 0,&1,&-\xi_3,&-\xi_4&),\\
    \eta&=(&-\eta_1,&0,&1,&-\eta_4&),\\
     \alpha &=(&1,&1-\boldsymbol{\alpha}_2,&-\alpha_3,&-\alpha_4&),\\
    \beta&=(&1-\boldsymbol{\beta}_1,&-\beta_2,&1,&-\beta_4&),\\
   \tau&=(&-\tau_1,&1,&1-\boldsymbol{\tau}_3,&-\tau_4&). 
\end{array} 
\end{align*}
Applying now Keller's theorem to $\gamma-\alpha$, $\xi-\tau$, and $\eta-\beta$, we obtain
\begin{align*}
     \gamma_2-\boldsymbol{\alpha}_2\neq 0, \quad \xi_3=\boldsymbol{\tau}_3\neq 0, \quad \eta_1=\boldsymbol{\beta}_1\neq 0.
 \end{align*} Using this and applying the theorem to $\gamma-\tau$, $\xi-\beta$, and $\eta-\alpha$, we obtain that $\tau_1=0$, $\beta_2=0$, and $\alpha_3=0$. Consequently
\begin{align*}
\begin{array}{clccccc}
 \gamma &=(&1,&-\boldsymbol{\alpha}_2,&0,&-\gamma_4&),\\
   \xi&=(& 0,&1,&-\boldsymbol{\tau}_3,&-\xi_4&),\\
    \eta&=(&-\boldsymbol{\beta}_1,&0,&1,&-\eta_4&),\\
     \alpha &=(&1,&1-\boldsymbol{\alpha}_2,&0,&-\alpha_4&),\\
    \beta&=(&1-\boldsymbol{\beta}_1,&0,&1,&-\beta_4&),\\
   \tau&=(&0,&1,&1-\boldsymbol{\tau}_3,&-\tau_4&).
\end{array} 
\end{align*}
The respective six translates of $[0,1)^4$ as well as the set $[0,1)^4$ (in the background) are depicted in Figure~\ref{fig:Nir}. We have 
\begin{align*}
\begin{array}{clccccc}
 \alpha -\gamma &=(&0,&1,&0,&-\alpha_4+\gamma_4&)\in A \Z^4,\\
    \beta-\eta&=(&1,&0,&0,&-\beta_4+\eta_4&)\in A \Z^4,\\
   \tau-\xi&=(&0,&0,&1,&-\tau_4+\xi_4&)\in A \Z^4,
\end{array} 
\end{align*}
and consequently
\begin{align*}
    \Lambda= e_4\Z + (\alpha-\gamma)\Z + (\beta-\eta)\Z + (\tau-\xi)\Z\subseteq A \Z^4.
\end{align*}
As noted above, to see that $([0,1)^4,\Lambda)$ tiles $\R^4$, it suffices to check that $(-1,1)^4 \cap \big( [0,1)^4{+}\lambda \big) =\emptyset$ for $\lambda\neq 0$.  
This follows by observing that 
\begin{align*}
    (m,\ell,n,k) +(\gamma_4-\alpha_4)\ell+(\eta_4-\beta_4)m+(\xi_4-\tau_4)n \in [0,1)^4
\end{align*}
for some $(k,\ell,m,n)\in\Z^4$ implies $(k,\ell,m,n)=(0,0,0,0)$. That $([0,1)^4,\Lambda)$ fills the space is again observed by fact that the area of a fundamental domain of the lattice is one.

\section{Proof of Theorem~\ref{thm:main}}
\label{sec:proof-thm-main}

The proof of Theorem~\ref{thm:main} utilizes the following Lemma that characterizes lattice cube tilings. It is presented in similar form in \cite{Ko98} and \cite{Zo05}. 

\begin{lemma}\label{lem:cubetiling}
For a lattice $\Lambda \subseteq \R^d$, the following are equivalent. 
\begin{enumerate}[i.]
     \item\label{list:cubetiling1} 
$(\mathcal{Q}, \Lambda)$ is a tiling of $\R^d$, that is, $\mathcal{Q}$ is a fundamental domain of the lattice $\Lambda$. 


     \item\label{list:cubetiling3} 
$\Lambda=P G \Z^d$ with a permutation matrix $P \in \{0,1\}^{d \times d}$ and a lower unitriangular matrix $G \in \R^{d \times d}$.


%

\end{enumerate}
\end{lemma}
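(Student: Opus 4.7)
The plan is to prove the two implications separately, with the forward direction being the easy check and the reverse direction requiring the descent.

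For $\ref{list:cubetiling3}\Rightarrow\ref{list:cubetiling1}$, I would first observe that any permutation matrix satisfies $P\mathcal{Q}=\mathcal{Q}$ (since $[0,1)^d$ is invariant under coordinate permutations), so $(\mathcal{Q},PG\Z^d)$ is a tiling if and only if $(\mathcal{Q},G\Z^d)$ is. For $G$ lower unitriangular, $(\mathcal{Q},G\Z^d)$ tiles by a coordinate-by-coordinate argument: given $x\in\R^d$, the condition $x-Gn\in[0,1)^d$ decomposes into $x_i - \sum_{j\le i} g_{ij}n_j \in [0,1)$ for $i=1,\ldots,d$, and each $n_i$ is uniquely determined once $n_1,\ldots,n_{i-1}$ are fixed, using $g_{ii}=1$. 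This yields both coverage and non-overlap.

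The reverse direction $\ref{list:cubetiling1}\Rightarrow\ref{list:cubetiling3}$ is the substantive one, and I would establish it by induction on $d$. The base case $d=1$ is immediate with $P=G=1$. For the inductive step, I would invoke Theorem~\ref{thm:Minkowski-Hajos}(b), which yields a pair of twin cubes in the lattice tiling and hence $e_i\in\Lambda$ for some $i$. Conjugating by the permutation swapping coordinates $i$ and $d$ (which preserves $\mathcal{Q}$), one may assume $e_d\in\Lambda$.

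I would then reduce to dimension $d-1$. Let $\Lambda'$ be the projection of $\Lambda$ onto the first $d-1$ coordinates. Using the tiling condition, I would first verify that $\Lambda\cap\R e_d=\Z e_d$: any translate $t e_d$ with $t\in(0,1)$ would make $\mathcal{Q}+te_d$ overlap $\mathcal{Q}$, contradicting the tiling. This implies $\Lambda'$ is a full-rank lattice in $\R^{d-1}$, and moreover that $(\mathcal{Q}^{(d-1)},\Lambda')$ itself tiles: covering descends from $(\mathcal{Q},\Lambda)$ covering $\R^d$, while uniqueness follows because for each candidate $(n_1,\ldots,n_{d-1})$ there is exactly one $n_d$ placing the $d$-th coordinate in $[0,1)$. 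By the induction hypothesis $\Lambda'=P'G'\Z^{d-1}$ with $P'$ a permutation and $G'$ lower unitriangular. Lifting each column $u_j$ of $P'G'$ to an element $(u_j,c_j)\in\Lambda$ with $c_j\in[0,1)$ (possible since $e_d\in\Lambda$), the set $\{(u_j,c_j)\}_{j=1}^{d-1}\cup\{e_d\}$ is then a $\Z$-basis of $\Lambda$ (again using $\Lambda\cap\R e_d=\Z e_d$), and I would conclude with the block factorization
\[
\Lambda = \begin{pmatrix} P'G' & 0 \\ c^T & 1\end{pmatrix}\Z^d = \underbrace{\begin{pmatrix} P' & 0 \\ 0 & 1\end{pmatrix}}_{=:\,P}\,\underbrace{\begin{pmatrix} G' & 0 \\ c^T & 1\end{pmatrix}}_{=:\,G}\,\Z^d,
\]
where $P$ is a $d\times d$ permutation matrix and $G$ is lower unitriangular, completing the induction.

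The main obstacle is the descent step: carefully justifying that $\Lambda'$ is a full-rank lattice that tiles $\R^{d-1}$ by unit cubes, and that the chosen lifts together with $e_d$ form a genuine $\Z$-basis of $\Lambda$. Both hinge on the identity $\Lambda\cap\R e_d = \Z e_d$, which is itself extracted from the tiling condition combined with the element $e_d\in\Lambda$ supplied by Minkowski--Haj\'os.
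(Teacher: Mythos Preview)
Your proposal is correct and follows essentially the same route as the paper: induction on $d$, invoking Minkowski--Haj\'os (Theorem~\ref{thm:Minkowski-Hajos}(b)) to obtain a standard basis vector in $\Lambda$, permuting it into the last coordinate, projecting to $\R^{d-1}$, applying the induction hypothesis, and reassembling via a block-triangular factorization. The only organizational differences are that the paper handles $\ref{list:cubetiling3}\Rightarrow\ref{list:cubetiling1}$ by showing $G^{-1}\mathcal{Q}$ is a fundamental domain of $\Z^d$ rather than your direct coordinatewise argument, and for $\ref{list:cubetiling1}\Rightarrow\ref{list:cubetiling3}$ it first extends $e_j$ to a basis of $\Lambda$ and then projects the remaining basis vectors, whereas you project the whole lattice and lift afterwards; both lead to the same block decomposition.
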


After the following remark but prior to the proof of Theorem~\ref{thm:main}, we include a proof of this result for sake of completeness and to illustrate the use of Haj{\'o}s' Theorem, i.e., the lattice case of Theorem~\ref{thm:Minkowski-Hajos}. 

\begin{remark}\label{rmk:lower-upper-unitriangular-matrices}~

\rm
(a) The conditions in Lemma~\ref{lem:cubetiling} are formulated in terms of the lattice $\Lambda$ itself, while the conditions in Theorem~\ref{thm:Minkowski-Hajos} are given in terms of the matrix $A \in \R^{d \times d}$ of the lattice $A \Z^d$.  
In general, we have  
\[
\begin{split}
A \mathcal{Q} = B \mathcal{Q} \;\; \quad  &\Leftrightarrow \quad  A = B P \;\; 
\text{for a permutation matrix $P \in \{0,1\}^{d \times d}$} \\
A \Z^d = B \Z^d  \quad  &\Leftrightarrow \quad  A = B U \;\; 
\text{for a unimodular matrix $U \in \Z^{d \times d}$} 
\end{split}
\]
where a unimodular matrix means a square integer matrix with determinant $+1$ or $-1$. 
Writing $\Lambda = A \Z^d$, condition $\ref{list:cubetiling3}$ can be expressed as  $A = P G U$, where $U \in \Z^{d \times d}$ is a unimodular matrix.  \\
(b) Observing that 
\[ 
	\begin{pmatrix}
		0 & \cdots & 0& 1 \\
		0 & \cdots & 1 & 0 \\
		\vdots  & \iddots & \vdots  & \vdots  \\ 
		1  & \cdots & 0 & 0	  
	\end{pmatrix} ^{-1} 	
	\begin{pmatrix}
		1& 0 & \cdots & 0 \\
		s_{2,1}  & 1  & \cdots & 0  \\ 
		\vdots & \vdots & \ddots & \vdots \\
		s_{d,1}  & s_{d,2} & \cdots & 1	 
	\end{pmatrix} 
	\begin{pmatrix}
		0 & \cdots & 0& 1 \\
		0 & \cdots & 1 & 0 \\
		\vdots  & \iddots & \vdots  & \vdots  \\ 
		1  & \cdots & 0 & 0	 
	\end{pmatrix}  	  
=
	\begin{pmatrix}
		1& s_{d,d-1} & \cdots & s_{d,1} \\
		0  & 1  & \cdots & s_{d-1,1}  \\ 
		\vdots & \vdots & \ddots & \vdots \\
		0  & 0 & \cdots & 1	 
	\end{pmatrix} ,  
\]
we have
\[
\begin{split}
& \{ P  G \Z^d :  \textnormal{$G$ is lower unitriangular and $P$ is a permutation matrix} \} \\
&= \{ P  G \Z^d :  \textnormal{$G$ is upper unitriangular and $P$ is a permutation matrix} \} \\
&= \{ P  G \Z^d :  \textnormal{$G$ is unitriangular and $P$ is a permutation matrix} \} . 
\end{split}
\] 
Consequently, the term ``lower'' in $\ref{list:cubetiling3}$ of Lemma~\ref{lem:cubetiling} can be replaced with ``upper'' or be removed.   \\
(c) Lemma~\ref{lem:cubetiling} agrees with a result of Kolountzakis \cite[Equation~(8)]{Ko98}, which states that if $(\mathcal{Q}, C\Z^d)$ is a lattice tiling, then after a permutation of the coordinates axes, the matrix $C$ is a lower unitriangular matrix. \\  
(d) Combining Theorem~\ref{thm:main} with Lemma~\ref{lem:cubetiling}, 
we deduce that $(\mathcal{Q}, \Lambda)$ is a tiling with a lattice $\Lambda  \subseteq A \Z^d$ if and only if  
$\Lambda = PG \Z^d$ and $A\Z^d = P G R^{-1} \Z^d$,
where $P \in \{0,1\}^{d \times d}$ is a permutation matrix, $G \in \R^{d \times d}$ is a (lower) unitriangular matrix, and $R \in \Z^{d \times d}$ is a nonsingular integer matrix.   
Note that $\Lambda$ is a subgroup of $A\Z^d$, 
where the density of $\Lambda$ is equal to $|\det (PG) |^{-1} = 1$, and the density of $A\Z^d$ is equal to $|\det (P G R^{-1}) |^{-1} = |\det (R) | =: K \in \N$. Hence, the quotient group $A\Z^d / \Lambda$ has order $K$.  
\end{remark}

\medskip

\begin{proof}[Proof of Lemma~\ref{lem:cubetiling}]
Given a lattice $\Lambda = A \Z^d$ with $A \in \mathrm{GL} (d,\R)$, we may write $\Lambda = \{ n_1 \vec{a}_1 + \cdots + n_d \, \vec{a}_d  \;:\; n_1, \ldots, n_d \in \Z \}$ 
where $\vec{a}_1 , \ldots  , \vec{a}_d$ are the columns of $A$.  
It is also possible that $\Lambda = A' \Z^d$ for some different matrix $A' \in \mathrm{GL} (d,\R)$; in fact, $A'$ can be just a column reordering of $A$ or a completely different matrix.
In view of this, we introduce the following terminology.  
If $\Lambda = [ \, \vec{b}_1 , \ldots  , \vec{b}_d \, ] \, \Z^d$ for some $\vec{b}_1 , \ldots  , \vec{b}_d \in \R^d$, then we call $\mathcal{B} = \{ \vec{b}_1 , \ldots  , \vec{b}_d \}$ a \emph{basis} for $\Lambda$. 
For convenience, we will slightly abuse the notation and also use $\mathcal{B}$ to denote the matrix $[ \, \vec{b}_1 , \ldots  , \vec{b}_d \, ]$. 
We denote by $\{ e_1 , \ldots, e_d \}$ the Euclidean basis of $\R^d$.

$\ref{list:cubetiling1} \Rightarrow \ref{list:cubetiling3}$: \ 
We shall prove by induction that if $\mathcal{Q}^{(d)} = [0,1)^{d}$ is a fundamental domain of $\Lambda$, then $\Lambda=P G \Z^d$ with a permutation matrix $P \in \{0,1\}^{d \times d}$ and a lower unitriangular matrix $G \in \R^{d \times d}$.
To this end, first note that in the case $d=1$ we have $\Lambda=\Z = [1]^{-1} \, [1] \, [1] \, \Z$.  

Let us now assume that the statement holds for $d-1$, and that the $\Lambda$-translates of $\mathcal{Q}^{(d)}$ tile $\R^d$. 
By Theorem~\ref{thm:Minkowski-Hajos}, 
we have $e_j\in \Lambda$ for some $j\in\{1,\ldots,d\}$, so we can find a basis $\mathcal{B}= \{ v_1,\ldots,v_{j-1},e_j,v_{j+1},\ldots,v_{d} \}$ for $\Lambda$, that is, $\Lambda = \mathcal{B} \Z^d$.  
Let $\tau$ be the transposition which interchanges the $j$-th and the $d$-th element, and let $P_\tau$ be the permutation matrix for $\tau$, that is, $P_\tau (j,d) = P_\tau (d,j) = 1$, and $P_\tau (k,k) = 1$ for all $k \neq j, d$, and all other entries are zero, i.e., 
\[
P_\tau
= P_\tau^{-1}
= P_\tau^T 
=
	\begin{pmatrix}
		1& 0 & \cdots & 0 & \cdots & 0 \\ 
		0 & 1 & \cdots & 0 &  \cdots & 0 \\ 
		\vdots & \vdots & \ddots & \vdots &  \ddots & \vdots \\
		0 & 0 & \cdots & 0 & \cdots & 1 \\ 
		\vdots & \vdots & \ddots & \vdots & \ddots & \vdots \\ 
		0 & 0& \cdots & 1 & \cdots & 0
	\end{pmatrix}.
\]
Since $\mathcal{Q}^{(d)}$ is symmetric with respect to all the coordinates, it is also a fundamental domain of  $\Lambda' = \mathcal{B}' \Z^d$ with $\mathcal{B}' := P_\tau \mathcal{B} P_\tau^{-1}$. Note that $\mathcal{B}'$ can be written as $\mathcal{B}'  = \{ v_1' , \ldots, v_{d-1}', e_d \}$ for some $v_1' , \ldots, v_{d-1}' \in \R^d$.  
Let $v_1'' , \ldots,v_{d-1}'' \in \R^{d-1}$ be the vectors obtained by projecting the vectors $v_1' , \ldots, v_{d-1}'$ to the first $d-1$ coordinates, that is, 
\[
\mathcal{B}' = 	\begin{pmatrix}
		|& \cdots & | & 0 \\[2mm] v_1'  & \cdots  & v_{d-1}' & 0  \\[2mm] |& \cdots & | & 0 \\[1mm] | & \cdots & | & 1
	\end{pmatrix} 
= 	\begin{pmatrix}
		|& \cdots & | & 0 \\[2mm] v_1''  & \cdots  & v_{d-1}'' & 0  \\[2mm] |& \cdots & | & 0 \\[1mm] w_1 & \cdots & w_{d-1} & 1
	\end{pmatrix} 	. 
\]
Since $\mathcal{Q}^{(d)}$ a fundamental domain of  $\Lambda'$, it follows that $\mathcal{Q}^{(d-1)} = [0,1)^{d-1}$ is a fundamental domain of $[ v_1'' , \ldots,v_{d-1}'' ] \, \Z^{d-1}$. 
Then the induction hypothesis provides a permutation matrix $P''\in \{0,1\}^{(d-1)\times(d-1)}$ which permutes the elements $1,\ldots,d-1$, and a lower unitriangular matrix $G'' \in \R^{(d-1)\times(d-1)}$ such that $[ v_1'' , \ldots,v_{d-1}'' ] \, \Z^{d-1}= P'' G'' \Z^{d-1} = P'' G'' (P'')^{-1} \Z^{d-1}$.
Setting $w = (w_1, \ldots, w_{d-1})^T$, we have 
\[
\begin{split}
\mathcal{B}' &= 	\begin{pmatrix}
		P'' G'' (P'')^{-1}  & 0 \\ w^T & 1
	\end{pmatrix} \\
	&=
	\underbrace{\begin{pmatrix}
		  P'' & 0 \\ 0 & 1
	\end{pmatrix}}_{=: P' }
	\underbrace{\begin{pmatrix}
		  G'' & 0  \\ w^T P'' & 1
	\end{pmatrix}}_{=: G' }
	\begin{pmatrix}
		 (P'')^{-1}  & 0 \\ 0 & 1
	\end{pmatrix} \\
	&= P'  G'  (P')^{-1} 
\end{split}
\]
and therefore
$\mathcal{B} = P_\tau^{-1} \mathcal{B}' P_\tau = P_\tau^{-1} P'  G'  (P')^{-1}   P_\tau 
= P G P^{-1}$, 
where $G := G'$ is lower unitriangular and $P := P_\tau^{-1} P' \in \{0,1\}^{d \times d}$ is a permutation matrix. 
Hence, we have $\Lambda = \mathcal{B} \Z^d = P G P^{-1} \Z^d = P G \Z^d$, which is $\ref{list:cubetiling3}$. 

$\ref{list:cubetiling3} \Rightarrow \ref{list:cubetiling1}$: \ 
Assume that $\Lambda=P G \Z^d$ with a permutation matrix $P \in \{0,1\}^{d \times d}$ and a lower unitriangular matrix $G \in \R^{d \times d}$.
Since the set of lower unitriangular matrices is a group under matrix multiplication, we may write 
\[
\begin{split}
G^{-1} &= 
	\begin{pmatrix}
		1& 0 & \cdots & 0 \\
		s_{2,1}  & 1  & \cdots & 0  \\ 
		\vdots & \vdots & \ddots & \vdots \\
		s_{d,1}  & s_{d,2} & \cdots & 1	
	\end{pmatrix}  \\
&=
\left(
\begin{array}{c|c|c|c}
e_1 + \sum_{n=2}^d s_{n,1} e_n \,&\,  e_2 + \sum_{n=3}^d s_{n,2} e_n  \,&\,  \cdots  \,&\,  e_d
\end{array}
\right)	
\end{split}
\]
which implies that $G^{-1} \mathcal{Q}$ is a fundamental domain of $\Z^d$. 
Note that $P G^{-1} P^{-1}$ is the matrix obtained by interchanging the coordinates of the matrix $G^{-1}$. 
Therefore, the set $P G^{-1} P^{-1} \mathcal{Q}$ is also a fundamental domain of $\Z^d$, equivalently, the set $\mathcal{Q}$ is a fundamental domain of $(P G^{-1} P^{-1})^{-1} \Z^d = P G P^{-1} \Z^d =P G  \Z^d = \Lambda$. 
\end{proof}

\medskip

We are now ready to prove Theorem~\ref{thm:main}. 

\begin{proof}[Proof of Theorem~\ref{thm:main}]  
$\ref{list:thm1part1} \Rightarrow \ref{list:thm1part2}$: \ 
Assume that $(\mathcal{Q}, \Lambda)$ is a tiling of $\R^d$ with a lattice $\Lambda = C\Z^d \subseteq A\Z^d$ for some $C \in \mathrm{GL} (d,\R)$. Then Lemma~\ref{lem:cubetiling} implies that $P G \Z^d = C\Z^d \subseteq A\Z^d$ with a permutation matrix $P \in \{0,1\}^{d \times d}$ and a lower unitriangular matrix $G \in \R^{d \times d}$. By multiplying both sides with $A^{-1}$, we have $A^{-1} P G \Z^d \subseteq \Z^d$ which implies $A^{-1} P G = R \in \Z^{d\times d}$ and hence, $G = P^{-1} A R$. Since $\widetilde{P} := P^{-1}$ is again a permutation matrix, we conclude that $\widetilde{P} AR$ is unitriangular, that is, $\ref{list:thm1part2}$ holds. 

$\ref{list:thm1part2} \Rightarrow \ref{list:thm1part1}$: \ 
Assume that $G := P A R$ is a unitriangular matrix for a permutation matrix $P \in \{0,1\}^{d \times d}$ and a nonsingular integer matrix $R \in \Z^{d \times d}$. 
Then by setting $\Lambda := P^{-1} G \Z^d = A R \Z^d \subseteq A\Z^d$ and using Lemma~\ref{lem:cubetiling}, we conclude that $(\mathcal{Q}, \Lambda)$ is a tiling of $\R^d$.

$\ref{list:thm1part1} \Rightarrow \ref{list:thm1part3}$: \ 
This is trivial.

$\ref{list:thm1part3} \Rightarrow \ref{list:thm1part1}:$ \ 
If $(\mathcal{Q}, \Phi)$ is a tiling for some $\Phi \subseteq A\Z^d$ with $d\leq 7$, then Theorem~\ref{thm:d-up-to-7-construct-lattice} provides a tiling $(\mathcal{Q}, \Lambda)$ with a lattice $\Lambda \subseteq A\Z^d$. 
\end{proof}

\section{Proof of Theorem~\ref{thm:d-up-to-7-construct-lattice}}
\label{sec:proof-thm-d-up-to-7-construct-lattice}

Let us collect some basic properties of exponential bases.

\begin{lemma}[See e.g., \cite{LPW24-second-RB}]
\label{lem:OB-basic-operations}
Assume that $\mathcal{E}(\Gamma)$ is an orthogonal basis for $L^2(S)$, where $\Gamma \subseteq \R^d$ is a discrete set and $S \subseteq \R^d$ is a measurable set. 
\begin{enumerate}[i.]
    \item For any $a \in \R^d$, the system $\mathcal{E}(\Gamma)$ is an orthogonal basis for $L^2(S+a)$.
    \item For any $b \in \R^d$, the system $\mathcal{E}(\Gamma + b)$ is an orthogonal basis for $L^2(S)$.
    \item For any nonsingular matrix $A \in \R^{d \times d}$, the system $\mathcal{E}(A \Gamma)$ is an orthogonal basis for $L^2(A^{-T} S)$.
In particular, for any $c > 0$, the system $\mathcal{E}(c \Gamma)$ is an orthogonal basis for $L^2(\frac{1}{c} S)$. 
\end{enumerate}
\end{lemma}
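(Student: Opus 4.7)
The plan is to exhibit, for each of the three parts, an explicit unitary operator between the relevant $L^2$ spaces that sends each element of the given orthogonal basis $\mathcal{E}(\Gamma)$ to a unimodular (or fixed positive) scalar multiple of the claimed basis element. Since unitary operators preserve orthogonality and completeness, and multiplying individual basis vectors by nonzero scalars preserves the property of being an orthogonal basis, this will prove all three statements.

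For (i), I would use translation: define $T_a : L^2(S) \to L^2(S+a)$ by $(T_a f)(x) = f(x-a)$. A direct change of variable shows $T_a$ is unitary, and it sends $e^{2\pi i \gamma \cdot x}$ to $e^{-2\pi i \gamma \cdot a}\, e^{2\pi i \gamma \cdot x}$, which is a unit-modulus multiple of the same exponential. Hence $\mathcal{E}(\Gamma)$ remains an orthogonal basis for $L^2(S+a)$.

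For (ii), I would use modulation: the map $M_b : L^2(S) \to L^2(S)$ defined by $(M_b f)(x) = e^{2\pi i b \cdot x} f(x)$ is clearly a unitary operator on $L^2(S)$, and $M_b\bigl(e^{2\pi i \gamma \cdot x}\bigr) = e^{2\pi i (\gamma + b)\cdot x}$. Thus $M_b$ maps $\mathcal{E}(\Gamma)$ bijectively onto $\mathcal{E}(\Gamma+b)$, which is therefore an orthogonal basis for $L^2(S)$.

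For (iii), I would use the dilation $D_A : L^2(S) \to L^2(A^{-T} S)$ defined by $(D_A f)(y) = |\det A|^{1/2} f(A^T y)$. A change of variable with $x = A^T y$ (and Jacobian $|\det A|$) shows $\|D_A f\|_{L^2(A^{-T}S)} = \|f\|_{L^2(S)}$, so $D_A$ is an isometry; invertibility comes from applying the same construction to $A^{-1}$. Using $\gamma \cdot (A^T y) = (A\gamma) \cdot y$, we obtain
\[
(D_A e^{2\pi i \gamma \cdot (\cdot)})(y) = |\det A|^{1/2} e^{2\pi i (A\gamma) \cdot y},
\]
so $D_A$ carries $\mathcal{E}(\Gamma)$ to a constant positive rescaling of $\mathcal{E}(A\Gamma)$, proving that $\mathcal{E}(A\Gamma)$ is an orthogonal basis for $L^2(A^{-T} S)$. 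The case $A = cI$ yields the special case. The only mildly delicate point is the bookkeeping of the transpose in the change of variable, which forces the domain to be $A^{-T}S$ rather than $A^{-1}S$; beyond this, the argument is routine and presents no real obstacle.
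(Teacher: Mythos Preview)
Your argument is correct and is exactly the standard one: translation, modulation, and dilation are unitary operators that carry each exponential to a nonzero scalar multiple of the corresponding target exponential, so orthogonality and completeness transfer. The paper itself does not supply a proof of this lemma at all---it simply cites the companion paper \cite{LPW24-second-RB}---so there is nothing to compare against; your write-up is precisely the routine verification one would expect to find behind such a citation.
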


For a discrete set $\Gamma \subseteq \R^d$, its lower and upper densities are defined by 
$D^-(\Gamma) := \liminf_{r \rightarrow \infty} \big( \inf_{z \in \R^d} |\Gamma \cap (z + [0,r]^d) | / r^d \big)$ and $D^+(\Gamma) := \limsup_{r \rightarrow \infty} \big( \sup_{z \in \R^d} |\Gamma \cap (z + [0,r]^d) | / r^d \big)$, respectively (see e.g., \cite{He07}). 
If $D^-(\Gamma) = D^+(\Gamma)$, we say that $\Gamma$ has \emph{uniform} density $D(\Gamma) := D^-(\Gamma) = D^+(\Gamma)$. 

\begin{proposition}[Landau's density theorem \cite{La67}] 
\label{prop:Landau}
Let $\Gamma \subseteq \R^d$ be a discrete set and let $S \subseteq \R^d$ be a bounded set.
If $\mathcal{E}(\Gamma)$ is an orthogonal basis for $L^2(S)$, then $\Gamma$ has uniform density and $D(\Gamma) = |S|$.
\end{proposition}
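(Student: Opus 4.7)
The plan is to reduce the statement to Landau's classical sampling and interpolation density theorems for the Paley--Wiener space $PW_S := \{g \in L^2(\R^d) : \supp \widehat g \subseteq S\}$. First I would translate the orthogonal basis hypothesis into a Plancherel-type identity on $PW_S$. For $f \in L^2(\R^d)$ supported in $S$, expansion of $f$ in the orthogonal basis $\mathcal{E}(\Gamma)$, combined with Parseval's identity and the normalization $\|e^{2\pi i \gamma \cdot x}\|_{L^2(S)}^2 = |S|$, yields
\begin{equation*}
|S|\,\|f\|_{L^2(\R^d)}^2 \;=\; \sum_{\gamma \in \Gamma} |\widehat f(\gamma)|^2 .
\end{equation*}
Setting $g = \widehat f$ (so that $g$ ranges over all of $PW_S$) and applying Plancherel converts this into the stable sampling identity
\begin{equation*}
|S|\,\|g\|_{L^2(\R^d)}^2 \;=\; \sum_{\gamma \in \Gamma} |g(\gamma)|^2, \qquad g \in PW_S.
\end{equation*}
Moreover, because $\mathcal{E}(\Gamma)$ is a basis and not merely a Parseval frame, the coefficient map is surjective onto $\ell^2(\Gamma)$ up to the $|S|$-normalization, so $\Gamma$ is also a set of stable interpolation for $PW_S$.

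Second, I would invoke Landau's two density theorems from \cite{La67}: any set of stable sampling for $PW_S$ satisfies $D^-(\Gamma) \geq |S|$, and any set of stable interpolation for $PW_S$ satisfies $D^+(\Gamma) \leq |S|$. Chaining these with the trivial bound $D^-(\Gamma) \leq D^+(\Gamma)$ pinches both densities to $|S|$, yielding both uniform density of $\Gamma$ and the stated value $D(\Gamma) = |S|$.

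The main obstacle lies in Landau's two density bounds themselves, which are the genuinely nontrivial ingredient; the reduction above is bookkeeping by comparison. Their proofs proceed by fixing a cube $C_R$ of side length $R$ and comparing the counting function $|\Gamma \cap C_R|$ with the effective dimension of $PW_S$ localized to $C_R$. That effective dimension is extracted from the spectrum of the time--frequency concentration operator $P_S M_{\mathbf{1}_{C_R}} P_S$, whose eigenvalues are known to cluster near $0$ and $1$ with approximately $|S|\,R^d$ eigenvalues close to $1$. The stable sampling (respectively interpolation) inequality is then paired against this eigenvalue asymptotic via Weyl-type trace estimates, with delicate boundary bookkeeping needed to control the $o(R^d)$ transition eigenvalues. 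I would not reproduce this eigenvalue comparison but rather cite \cite{La67} for it, since it is the technical heart of the argument and has been packaged in many standard references.
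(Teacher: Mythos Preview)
The paper does not prove this proposition; it is stated with a citation to Landau~\cite{La67} and used as a black box. Your outline is the standard route for extracting the orthogonal-basis statement from Landau's sampling and interpolation inequalities, and it is correct. One small point worth making explicit: Landau's theorems require $\Gamma$ to be uniformly discrete, which you have not addressed. This follows immediately from orthogonality, since $\langle e^{2\pi i \gamma \cdot x}, e^{2\pi i \gamma' \cdot x}\rangle_{L^2(S)} = \widehat{\mathbf{1}_S}(\gamma'-\gamma)$ and $\widehat{\mathbf{1}_S}$ is continuous with $\widehat{\mathbf{1}_S}(0)=|S|>0$, so the zero set of $\widehat{\mathbf{1}_S}$ is bounded away from the origin.
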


As mentioned in Section~\ref{sec:background}, it holds for any $\Gamma \subseteq \R^d$ that 
$(\mathcal{Q},\Gamma)$ is a spectral pair if and only if 
it is a tiling pair (see Theorem 1.2 in \cite{LRW00}).  
Combining this with Lemma~\ref{lem:OB-basic-operations}, we obtain the following relationship between cube tilings and exponential bases. 

\begin{lemma}
\label{lem:equiv-spectral-tiling}
For any nonsingular matrix $A \in \R^{d \times d}$ and any discrete set $\Gamma \subseteq \R^d$, 
the following are equivalent: 
\begin{enumerate}[i.]

\item
$(\mathcal{Q}, A \Gamma)$ is a tiling pair;

\item
$(\mathcal{Q}, A \Gamma)$ is a spectral pair; 

\item
$(A^T \mathcal{Q},\Gamma)$ is a spectral pair, that is, $\mathcal{E}(\Gamma)$ is an orthogonal basis for $L^2(A^T \mathcal{Q})$. 

\end{enumerate}
\end{lemma}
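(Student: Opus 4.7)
The plan is to prove Lemma~\ref{lem:equiv-spectral-tiling} by a direct application of the two results that have just been recalled; no new ingredients are needed. The equivalence splits naturally into two independent pieces, namely ($\ref{list:cubetiling1}$)$\Leftrightarrow$($\ref{list:cubetiling3}$) and ($\ref{list:cubetiling3}$)$\Leftrightarrow$(iii), and neither involves the lattice hypothesis of Theorem~\ref{thm:Minkowski-Hajos}.

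First I would dispatch (i)$\Leftrightarrow$(ii). This is precisely the quoted Jorgensen--Pedersen / Lagarias--Reeds--Wang result (Theorem 1.2 in \cite{LRW00}, also referenced in Section~\ref{sec:background}) that for the unit cube the notions of tiling pair and spectral pair coincide, applied to the discrete set $A\Gamma$ in place of $\Gamma$. This is the only genuinely nontrivial input; in the lattice case it is elementary, but in the generic case it is the content of the cited theorem, which I would just invoke verbatim.

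Second I would prove (ii)$\Leftrightarrow$(iii) by applying part~(iii) of Lemma~\ref{lem:OB-basic-operations} in both directions. Assuming (iii), i.e.\ that $\mathcal{E}(\Gamma)$ is an orthogonal basis for $L^2(A^T\mathcal{Q})$, the lemma applied with the matrix $A$ yields that $\mathcal{E}(A\Gamma)$ is an orthogonal basis for $L^2\bigl(A^{-T}(A^T\mathcal{Q})\bigr) = L^2(\mathcal{Q})$, which is (ii). Conversely, assuming (ii) and applying the same part of the lemma with the matrix $A^{-1}$ gives that $\mathcal{E}(A^{-1}(A\Gamma)) = \mathcal{E}(\Gamma)$ is an orthogonal basis for $L^2\bigl((A^{-1})^{-T}\mathcal{Q}\bigr) = L^2(A^T\mathcal{Q})$, which is (iii).

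There is no real obstacle in this argument; the only subtle point is that the ``hard'' direction of (i)$\Leftrightarrow$(ii) (the implication tiling $\Rightarrow$ spectral for a non-lattice $A\Gamma$) is not proved here but imported from \cite{LRW00}. Everything else is a one-line change of variables, so the proof should consist essentially of the two computations $A^{-T}(A^T\mathcal{Q}) = \mathcal{Q}$ and $(A^{-1})^{-T}\mathcal{Q} = A^T\mathcal{Q}$ together with the appropriate citation.
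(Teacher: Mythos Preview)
Your proposal is correct and matches the paper's approach exactly: the paper states the lemma as a direct consequence of combining the cited spectral/tiling equivalence for the unit cube (\cite{LRW00}, Theorem~1.2) with Lemma~\ref{lem:OB-basic-operations}, and your two-step argument (i)$\Leftrightarrow$(ii) via the cited theorem and (ii)$\Leftrightarrow$(iii) via the change-of-variables lemma is precisely what is intended. One cosmetic remark: in your first paragraph you wrote ``($\ref{list:cubetiling1}$)$\Leftrightarrow$($\ref{list:cubetiling3}$)'', which are labels from a different lemma; you presumably meant (i)$\Leftrightarrow$(ii) and (ii)$\Leftrightarrow$(iii).
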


\begin{remark}\label{rmk:RB-density-Lambda} 
\rm
If $(\mathcal{Q}, \Phi)$ is a tiling of $\R^d$ with some $\Phi \subseteq A \Z^d$, then for $\Gamma := A^{-1} \Phi \subseteq \Z^d$ the system $\mathcal{E}(\Gamma)$ is an orthogonal basis for $L^2(A^T \mathcal{Q})$ by Lemma~\ref{lem:equiv-spectral-tiling}.
Then Proposition~\ref{prop:Landau} gives $D(\Gamma) = | \det A \, |$ but since $\Gamma\subseteq \Z^d$, we necessarily have $| \det A \, | \leq 1$.
In fact, Theorem~\ref{thm:Kolountzakis} asserts that $| \det A \, | = \frac{1}{N}$ for some $N \in \N$.
\end{remark}




We begin with the following criteria for lattice tilings. 

\begin{lemma}\label{lem:equivalencetiling} 
For a lattice $\Lambda=A\Z^d$ with $A \in \mathrm{GL} (d,\R)$, the following are equivalent.
\begin{enumerate}[i.]
     \item\label{list:LatticeTilingEquiv1} 
     $(\mathcal{Q} ,\Lambda)$ is a tiling pair;

     \item\label{list:LatticeTilingEquiv2} 
     $\mathcal{Q}\cap (\mathcal{Q}{+}\lambda) =\emptyset$ for all $0 \neq \lambda\in\Lambda$, and $\Lambda$ has density $1$ (i.e., $| \det A | =1$);

     \item\label{list:LatticeTilingEquiv3} 
     $| \det A | =1$ and $Ak \in (-1,1)^d$ for $k\in\Z^d$ implies $k=0$.
\end{enumerate}
\end{lemma}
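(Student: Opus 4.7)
My plan is to prove the cycle $\ref{list:LatticeTilingEquiv1} \Rightarrow \ref{list:LatticeTilingEquiv2} \Leftrightarrow \ref{list:LatticeTilingEquiv3} \Rightarrow \ref{list:LatticeTilingEquiv1}$, with $\ref{list:LatticeTilingEquiv2} \Leftrightarrow \ref{list:LatticeTilingEquiv3}$ being a notational rephrasing and $\ref{list:LatticeTilingEquiv2} \Rightarrow \ref{list:LatticeTilingEquiv1}$ carrying the substance.

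For $\ref{list:LatticeTilingEquiv2} \Leftrightarrow \ref{list:LatticeTilingEquiv3}$, I would start from the elementary identity $\mathcal{Q} - \mathcal{Q} = (-1,1)^d$, which yields that for any $v \in \R^d$, $\mathcal{Q} \cap (\mathcal{Q} + v) \neq \emptyset$ precisely when $v \in (-1,1)^d$. Parametrizing an arbitrary nonzero $\lambda \in \Lambda = A\Z^d$ as $\lambda = Ak$ with nonzero $k \in \Z^d$, the disjointness clauses in the two conditions become word-for-word identical, while the determinant condition is shared. So $\ref{list:LatticeTilingEquiv2}$ and $\ref{list:LatticeTilingEquiv3}$ are literally the same statement.

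For $\ref{list:LatticeTilingEquiv1} \Rightarrow \ref{list:LatticeTilingEquiv2}$, disjointness is built into the definition of a tiling pair, and a standard volume count over large boxes $[-R,R]^d$ shows that a disjoint covering by unit-volume cubes forces the lattice density $|\det A|^{-1}$ to equal $1$. The heart of the argument is $\ref{list:LatticeTilingEquiv2} \Rightarrow \ref{list:LatticeTilingEquiv1}$. I would introduce the $\Lambda$-periodic counting function $f(x) := \sum_{\lambda \in \Lambda} \mathbf{1}_{\mathcal{Q}+\lambda}(x)$. By disjointness, $f \leq 1$ pointwise, and integrating over a fundamental domain $F$ of $\Lambda$ gives
\[
\int_F f(x)\, dx \;=\; |\mathcal{Q}| \;=\; 1 \;=\; |\det A| \;=\; |F|,
\]
which together with $f \leq 1$ forces $f = 1$ almost everywhere. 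This already delivers tiling up to a set of measure zero, which is sufficient for almost all downstream uses.

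The main obstacle is promoting this measure-theoretic tiling to the exact set-theoretic equality $\bigcup_\lambda (\mathcal{Q}+\lambda) = \R^d$ demanded by the paper's definition. Here I would exploit that $\mathcal{Q} = [0,1)^d$ is half-open: assuming for contradiction that some $x \in \R^d$ is uncovered, I would approximate $x$ by a sequence $x_n = q_n + \lambda_n$ with $q_n \in \mathcal{Q}$ and $\lambda_n \in \Lambda$; the boundedness of $\{q_n\}$ and $\{x_n\}$ confines $\{\lambda_n\}$ to a finite subset of $\Lambda$, so along a subsequence $\lambda_n \equiv \lambda$ and $x - \lambda \in \overline{\mathcal{Q}}$. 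A short case analysis on which coordinate hyperplanes of the closed cube boundary are hit, together with the disjointness hypothesis applied to the candidate neighboring translates, then produces a lattice vector $\lambda'$ with $x \in \mathcal{Q} + \lambda'$, contradicting the assumption.
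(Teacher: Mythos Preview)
Your approach is essentially the paper's: the cycle is the same, the equivalence $\ref{list:LatticeTilingEquiv2}\Leftrightarrow\ref{list:LatticeTilingEquiv3}$ is exactly the observation $\mathcal{Q}-\mathcal{Q}=(-1,1)^d$, and for $\ref{list:LatticeTilingEquiv2}\Rightarrow\ref{list:LatticeTilingEquiv1}$ the paper likewise shows that $\mathcal{Q}$ fills a fundamental domain up to measure zero and then upgrades to an exact tiling. The paper in fact dismisses the upgrade with a single ``clearly'', so you are attempting more here, not less.

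That said, your final sentence is the one place with a real gap. Having obtained $x-\lambda\in[0,1]^d$ with some coordinates equal to $1$, you invoke ``the disjointness hypothesis applied to the candidate neighboring translates'' to manufacture a $\lambda'$ covering $x$; but disjointness only prevents overlaps, it cannot produce a covering translate, and there are no canonical ``neighbours'' since $e_i$ need not lie in $\Lambda$. Two clean repairs: (a) choose your approximating sequence specifically from the positive corner $x+(0,1/n)^d$ (which meets the a.e.\ covered set), so that along the constant subsequence $\lambda_n\equiv\lambda'$ you get $(x-\lambda')_i<(x_n-\lambda')_i<1$ in every coordinate, hence $x-\lambda'\in[0,1)^d$ with no boundary case needed; or (b) show directly that if $x$ is uncovered then so is the whole corner cube $[x,x+\delta)^d$ for small $\delta$, since for each of the finitely many nearby $\lambda$ some coordinate of $x-\lambda$ is either $<0$ or $\geq 1$, and both conditions persist on $[x_j,x_j+\delta)$ --- contradicting that the uncovered set is null.
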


\begin{proof}
$\ref{list:LatticeTilingEquiv1} \Rightarrow \ref{list:LatticeTilingEquiv2}$: 
Assume that $(\mathcal{Q} ,\Lambda)$ is a tiling pair, which means that the sets $\mathcal{Q}{+}\lambda$, $\lambda \in \Lambda$, are pairwise disjoint and their union is $\R^d$. 
In particular, we have $\mathcal{Q}\cap (\mathcal{Q}{+}\lambda) =\emptyset$ for all $0 \neq \lambda\in\Lambda$. Since the cube $\mathcal{Q} = [0,1)^d$ has volume $1$ and since $\cupdot_{\lambda \in \Lambda} (\mathcal{Q}{+}\lambda) = \R^d$, the lattice $\Lambda$ must have density $1$. The density of $\Lambda=A\Z^d$ is given by $| \det A |^{-1}$, so we conclude that $| \det A | =1$.

$\ref{list:LatticeTilingEquiv2} \Rightarrow \ref{list:LatticeTilingEquiv3}$: 
Suppose that $Ak \in (-1,1)^d$ for some $k\in\Z^d$. Then $\mathcal{Q} \cap (\mathcal{Q}{+} Ak) \neq \emptyset$, but since $A k \in \Lambda$ we deduce from $\ref{list:LatticeTilingEquiv2}$ that $Ak = 0$. Further, $A \in \mathrm{GL} (d,\R)$ implies that $k= 0 $. The condition $| \det A | =1$ follows immediately from $\ref{list:LatticeTilingEquiv2}$.

$\ref{list:LatticeTilingEquiv3} \Rightarrow \ref{list:LatticeTilingEquiv1}$: 
We will first show that $(\mathcal{Q}{+}\lambda ) \cap ( \mathcal{Q}{+}\lambda' ) = \emptyset$ for every $\lambda \neq \lambda' \in \Lambda$. 
Suppose to the contrary that for some $\lambda \neq \lambda' \in \Lambda$ this does not hold, 
equivalently, $\mathcal{Q} \cap (\mathcal{Q}{+}\lambda' {-} \lambda) \neq \emptyset$. 
This implies that $\lambda' - \lambda \in (-1,1)^d$, but since $\Lambda=A\Z^d$ is a lattice, we may write $\lambda' - \lambda = Ak$ for some $k\in\Z^d$, and thus $Ak \in (-1,1)^d$. 
Now, $\ref{list:LatticeTilingEquiv3}$ implies that $k= 0 $, and in turn, $\lambda' - \lambda = Ak = 0$, yielding a contradiction.

Now, the pairwise disjointness of $\mathcal{Q}{+}\lambda$, $\lambda \in \Lambda$, implies that $\mathcal Q$ is contained in a fundamental domain $\mathcal F$ of the lattice $A\Z^d$. Since $|\det A |=1$, the lattice $\Lambda=A\Z^d$ has density $1$ and every fundamental domain of $\Lambda$ has area $1$. We conclude that $\mathcal F \setminus \mathcal Q$ has Lebesgue measure 0, and $\mathcal Q$ is itself a fundamental domain of $A\Z^d$ in the Lebesgue measure a.e. sense. But clearly, $\R^d \backslash (\mathcal Q{+}\Lambda) =\R^d \backslash ([0,1)^d{+}\Lambda)$ being a set of measure zero implies that it is the empty set.
Hence, we conclude that $(\mathcal{Q} ,\Lambda)$ is a tiling pair. 
\end{proof}

We are now ready to prove Theorem~\ref{thm:d-up-to-7-construct-lattice} by using induction on the dimension $d$. 
For clarity, we use superscript $(d),(d{-}1)$ on all objects to clarify which dimension they belong in. 
Let $\pi_d:\R^d\rightarrow \R^{d-1}$ denote the projection given by $\pi_d(x_1,\ldots,x_{d-1},x_d)=(x_1,\ldots,x_{d-1})$.

\begin{proof}[Proof of Theorem~\ref{thm:d-up-to-7-construct-lattice}]
    For $d=1$, the hypothesis implies that $\Phi=\Z$, so the result holds trivially. In fact, we can choose $v_1=1$ and $w_1=0$.

\vspace{.3cm}
\noindent 
We assume that the result holding in dimension $d-1$  implies that it holds true in  dimension $d$.

Let $(\mathcal{Q}^{(d)},\Phi^{(d)})$ be a tiling pair.
 Since $d\leq 7$, Theorem~\ref{thm:Minkowski-Hajos} implies that there  
 exist $v_d,w_d\in\{0,1\}^d$ so that  $\phi^{(d)}_{v_d},\phi^{(d)}_{w_d}\in\Phi$ are twins, and, after possibly relabeling coordinates,  we assume without loss of generality, that  $\phi^{(d)}_{v_d}-\phi^{(d)}_{w_d}=e_d$. 

Intersecting the sets from the given tiling with the hyperplane $e_d^\perp$, we obtain a tiling $(\mathcal{Q}^{(d-1)},\Phi^{(d-1)})$ with elements in $\Phi^{(d-1)}$ being given by
     $$\phi^{(d-1)}_v=\pi_d \, \phi^{(d)}_{(v,0)}, \quad v\in \Z^{d-1}.$$
     So clearly, we have
     $\phi^{(d-1)}_{(0,\ldots,0)}=(0,\ldots,0)\in\R^{d-1}$.

\def\cube at (#1,#2,#3){\pgfmathsetmacro{\cubex}{1}
\pgfmathsetmacro{\cubey}{1}
\pgfmathsetmacro{\cubez}{-1}
\begin{scope}[shift={(#1,#2,#3)}]

\draw[black] (0,0,\cubez) -- ++(\cubex,0,0) -- ++(0,\cubey,0) -- ++(-\cubex,0,0) -- cycle;
\draw[black] (0,0,\cubez)-- ++(\cubex,0,0)-- ++(0,0,-\cubez) -- ++(-\cubex,0,0) -- cycle;
\draw[black] (0,0,\cubez) -- ++(0,\cubey,0) -- ++(0,0,-\cubez) -- ++(0,-\cubey,0) -- cycle;

\draw[black,fill=white!80] (\cubex,\cubey,0) -- ++(-\cubex,0,0) -- ++(0,-\cubey,0) -- ++(\cubex,0,0) -- cycle;
\draw[black,fill=white!80] (\cubex,\cubey,0)-- ++(-\cubex,0,0)-- ++(0,0,\cubez) -- ++(\cubex,0,0) -- cycle;
\draw[black,fill=white!80] (\cubex,\cubey,0) -- ++(0,-\cubey,0) -- ++(0,0,\cubez) -- ++(0,\cubey,0) -- cycle;

\end{scope}}
\def\cubeintersect at (#1,#2,#3){\pgfmathsetmacro{\cubex}{1}
\pgfmathsetmacro{\cubey}{1}
\pgfmathsetmacro{\cubez}{-1}
\begin{scope}[shift={(#1,0,#3)}]

\draw[blue] (0,0,\cubez)-- ++(\cubex,0,0)-- ++(0,0,-\cubez) -- ++(-\cubex,0,0) -- cycle;


\end{scope}}
\begin{figure}
\begin{center}
\begin{tikzpicture}[fill opacity=0.8, scale=2]

\cube at (0,-.5,-1);  
\cube at (0,0,0);
\cube at (1,-.3,-1+.6); 
\cube at (1,-.6,.6);
\cube at (2,-.1,-1+.2);
\cube at (2,-.5,.2); 
\cubeintersect at (0,-.5,-1);  
\cubeintersect at (0,0,0);
\cubeintersect at (1,-.3,-1+.6); 
\cubeintersect at (1,-.6,.6);
\cubeintersect at (2,-.1,-1+.2);
\cubeintersect at (2,-.5,.2); 

\fill[fill=gray, opacity=0.2] (-.5,0,1.2) -- (3.5,0,1.2) -- (3.5,0,-2.5) -- (-.5,0,-2.5) -- cycle;
\node[above] at (-.5,0,1.2) { $e_3^\perp$};



\fill[red] (0,0,0) circle (0.2mm); \node[above] at (0.1,-.19,0) {\tiny (\!0\!,0\!,0\!)};
\fill[red] (1,0,0) circle (0.2mm); \node[above] at (1.1,-.19,0) {\tiny (\!1\!,0\!,0\!)};
\fill[red] (2,0,0) circle (0.2mm); \node[above] at (2.1,-.19,0) {\tiny (\!2\!,0\!,0\!)};
\fill[red] (0,0,-1) circle (0.2mm); \node[above] at (0.1,-.19,-1) {\tiny (\!0\!,1\!,0\!)};
\fill[red] (1,0,-1) circle (0.2mm); \node[above] at (1.1,-.19,-1) {\tiny (\!1\!,1\!,0\!)};
\fill[red] (2,0,-1) circle (0.2mm); \node[above] at (2.1,-.19,-1) {\tiny (\!2\!,1\!,0\!)};


\end{tikzpicture}
    \end{center}
    \caption{
    Intersection of a cube tiling in $\R^3$ with the plane $e_3^\perp$, that is, with orthogonal complement of $e^3$.   The intersection gives a tiling of the $2$-dimensional plane, that is, of $\R^2$.
}
\end{figure}
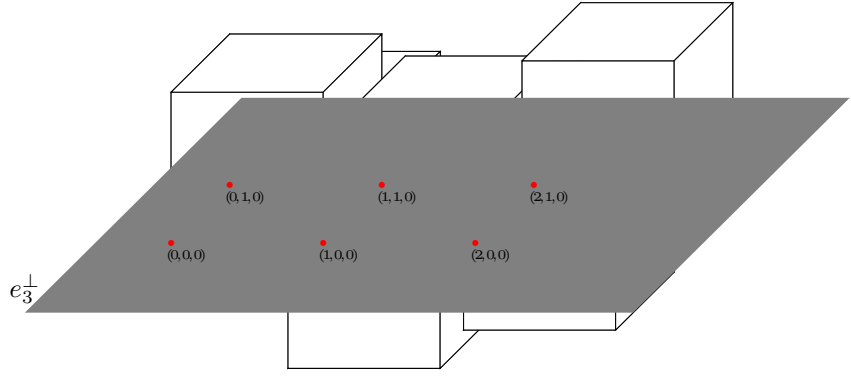

By induction hypothesis, there exist $p_1,\ldots,p_{d-1},q_1,\ldots,q_{d-1}\in \{0,1\}^{d-1}$ such that
\begin{align*}
    \Big(\mathcal{Q}^{(d-1)},\big(\phi^{(d-1)}_{p_1}-\phi^{(d-1)}_{q_1}\big)\Z + \ldots + (\phi^{(d-1)}_{p_{d-1}}-\phi^{(d-1)}_{q_{d-1}}\big)\Z\Big)
\end{align*} 
is a tiling pair for $\R^{d-1}$. 
Then Lemma~\ref{lem:equivalencetiling} guarantees that the matrix  
\[
M^{(d-1)} 
=
    \begin{pmatrix}
        \phi^{(d-1)}_{p_1}-\phi^{(d-1)}_{q_1} & \big| &  \cdots & \big| & \phi^{(d-1)}_{p_{d-1}}-\phi^{(d-1)}_{q_{d-1}}  
    \end{pmatrix}
\]
satisfies $|\det M^{(d-1)}| =1$
and that $M^{(d-1)}k\in(-1,1)^d$ for $k\in\Z^{d-1}$ implies $k= 0 $.

We claim that 
\begin{align*}
    \Big(\mathcal{Q}^{(d)},\big(\phi^{(d)}_{(p_1,0)}-\phi^{(d)}_{(q_1,0)}\big)\Z + \ldots + 
    \big(\phi^{(d)}_{(p_{d-1},0)}-\phi^{(d)}_{(q_{d-1},0)}\big)\Z+e_d\Z) 
\end{align*}
is a tiling pair for $\R^d$. Let
\begin{align*}
   M^{(d)}&= \begin{pmatrix}
        \phi^{(d)}_{(p_1,0)}-\phi^{(d)}_{(q_1,0)} & \big| &   \cdots & \big| & \phi^{(d)}_{(p_{d-1},0)}-\phi^{(d)}_{(q_{d-1},0)} & \big| & e_d  
    \end{pmatrix}\\
    &=
    \begin{pmatrix}
         \phi^{(d-1)}_{p_1}-\phi^{(d-1)}_{q_1} & \big| &   \cdots & \big| & \phi^{(d-1)}_{p_{d-1}}-\phi^{(d-1)}_{q_{d-1}}  & \big| &  0   \\[1mm]
        \hline\\[-3.5mm]
        a_1 & \big| &   \cdots & \big| & a_{d-1} & \big| & 1 
    \end{pmatrix}
    \\
    &=
    \begin{pmatrix}
         &  &   M^{(d-1)} &    & \big|&  0  \\
        \hline
         & a_1 &   \cdots &   a_{d-1} & \big| & 1 
    \end{pmatrix}
\end{align*}
where $a_i\in \R$ is the $d$-th component of $\phi^{(d)}_{(p_i,0)}-\phi^{(d)}_{(q_i,0)}$ for $i=1,\ldots,d$. Clearly, we have $|\det M^{(d)}| = 1\cdot |\det M^{(d-1)}| =1$.
Further, the condition $M^{(d)}(k_1,\ldots,k_d)^T\in (-1,1)^d$ is equivalent to having $M^{(d-1)}(k_1,\ldots,k_{d-1})^T \in (-1,1)^{d-1}$ and $k_1a_1+\ldots + k_{d-1}a_{d-1}+k_d\in (-1,1)$. Now, $M^{(d-1)}(k_1,\ldots,k_{d-1})^T \in (-1,1)^{d-1}$  implies $k_1=\ldots=k_{d-1}=0$, and, therefore, we have $k_d=k_1a_1+\ldots + k_{d-1}a_{d-1}+k_d\in (-1,1)$ and $k_d=0$ as well. The claim then follows from applying again Lemma~\ref{lem:equivalencetiling}.

Setting $v_i=(p_i,0)\in \{0,1\}^d$ and $w_i=(q_i,0)\in \{0,1\}^d$ for $i=1,\ldots,d$ completes the induction step.   
\end{proof}

\section{Proof of Theorem~\ref{thm:Kolountzakis}}\label{sec:Kolountzakis}

With permission of Mihalis Kolountzakis we include his proof of Theorem~\ref{thm:Kolountzakis}. Here, we will assume only that $(S,\Phi)$ is 
a tiling pair for $\R^d$ a.e., in the sense that the sets $\R^d \setminus \bigcup_{\varphi \in \Phi} S {+} \varphi$ and $( S{+}\varphi ) \cap ( S{+}\varphi' )$ for  $\varphi\neq \varphi'$ in $\Phi$ are of Lebesgue measure 0, so not necessarily empty.  

\begin{proof}[Proof of Theorem~\ref{thm:Kolountzakis}]
The tiling condition of $(S,\Phi)$ implies $f(x)=\sum_{\varphi\in\Phi} \mathbf{1}_S (x-\varphi)=1$ for almost every $x\in\R^d$. Possibly replacing $S$ with one of its translates allows us to assume $f(n)=1$ for all $n\in\Z^d$.

Let $K=S\cap \Z^d\subseteq \Z^d$. As $\Phi\subseteq \Z^d$ we have $K+\varphi\subseteq \Z^d$ for all $\varphi\in\Phi$. Moreover, 
$\sum_{\varphi\in\Phi} \mathbf{1}_K (n-\varphi)=f(n)=1$ for $n\in\Z^d$ implies that $(K,\Phi)$ is a tiling pair for $\Z^d$. This clearly implies that the density of $\Phi$ is given by $D(\Phi)=\frac 1 {|K|}$ where $|K|$ is the cardinality of the set $K$. As $(S,\Phi)$ being a tiling pair implies $|S| = D(\Phi)^{-1}$, the first statement follows.

To prove the second part of Theorem~\ref{thm:Kolountzakis}, assume that $(\mathcal Q,\Phi)$ is a tiling pair with $\Phi\subseteq A\Z^d$. Then $(A^{-1} \mathcal Q, A^{-1}\Phi)$ is also a tiling pair with $A^{-1}\Phi \subseteq \Z^d$, and hence the first statement implies $|\det A|^{-1}= |(\det A)^{-1}|=|A^{-1} \mathcal{Q} |\in\N$. 
\end{proof}

\section{Proof of Corollary~\ref{cor:main-application-to-expRB}}
\label{sec:proof-cor-main-application-to-expRB}


$\ref{list:thm1part12} \Leftrightarrow \ref{list:thm1part22}$: \ 
Theorem~\ref{thm:main} implies that condition ({\romannumeral 2}) is equivalent to the existence of a lattice $\Lambda \subseteq B^T A\Z^d$ with $(\mathcal{Q}, \Lambda)$ being a tiling, that is, $\mathcal{E}(\Lambda)$ being an orthogonal basis for $L^2(\mathcal{Q})$ (see Lemma~\ref{lem:equiv-spectral-tiling}).
By setting $\widetilde{\Lambda} := B^{-T} \Lambda \subseteq A\Z^d$ and using Lemma~\ref{lem:OB-basic-operations}, this is again equivalent to $\mathcal{E}(\widetilde{\Lambda})$ being an orthogonal basis for $L^2(B \mathcal{Q})$, which is $\ref{list:thm1part12}$. 

$\ref{list:thm1part22} \Leftrightarrow \ref{list:thm1part22-prime} {}^{\prime}$: \ 
Assume that $\ref{list:thm1part22}$ holds and let $G := P B^T AR$ be the resulting unitriangular matrix. 
Then $B = A^{-T} R^{-T} G^T P^{-T}$ and therefore, $B\mathcal{Q} = A^{-T} \widetilde{R}^{-1} \widetilde{G} \mathcal{Q}$ where $\widetilde{R} := R^T \in \Z^{d \times d}$ is again a nonsingular integer matrix and $\widetilde{G} := G^T$ is again a unitriangular matrix, which gives $\ref{list:thm1part22-prime} {}^{\prime}$. 
Conversely, assume that $\ref{list:thm1part22-prime} {}^{\prime}$ holds and let $B\mathcal{Q} = A^{-T} R^{-1} G \mathcal{Q}$ with a nonsingular integer matrix $R \in \Z^{d \times d}$ and a unitriangular matrix $G \in \R^{d \times d}$. Then $B P = A^{-T} R^{-1} G$ with a permutation matrix $P \in \{0,1\}^{d \times d}$, and therefore, by setting $\widetilde{P} := P^T$ and $\widetilde{R} := R^T$, we have that $\widetilde{P} B^T A \widetilde{R} = P^T B^T A R^T = G^T$ is unitriangular, which gives $\ref{list:thm1part22}$. 

$\ref{list:thm1part12} \Leftrightarrow \ref{list:thm1part32}$: \ 
By Lemma~\ref{lem:OB-basic-operations}, condition ${\romannumeral 3}$ is equivalent to the existence of a set $\Phi \subseteq A\Z^d$ with
$\mathcal{E}( B^T \Phi)$ being an orthogonal basis for $L^2(\mathcal{Q})$,
that is, $(\mathcal{Q}, B^T \Phi)$ being a tiling (see Lemma~\ref{lem:equiv-spectral-tiling}), where $B^T \Phi \subseteq B^T A\Z^d$. 
If $d\leq 7$, then Theorem~\ref{thm:main} shows that this is equivalent to the existence of a tiling $(\mathcal{Q}, \Lambda)$ with a lattice $\Lambda \subseteq B^T A\Z^d$. 
Further, by setting $\widetilde{\Lambda} := B^{-T} \Lambda \subseteq A\Z^d$ and using Lemma~\ref{lem:OB-basic-operations}, this is again equivalent to $\mathcal{E}(\widetilde{\Lambda})$ being an orthogonal basis for $L^2(B \mathcal{Q})$, which is $\ref{list:thm1part12}$.
\hfill $\Box$ 

\backmatter

%
%
%

\bmhead{Acknowledgments}

D.G.~Lee is supported by the National Research Foundation of Korea (NRF) grant funded by the Korean government (MSIT) (RS-2023-00275360).
G.E.~Pfander acknowledges support by the German Research Foundation (DFG) Grant PF 450/11-1.

%

\section*{Declarations}

\subsection*{Funding}
This work was supported by the National Research Foundation of Korea (NRF) grant funded by the Korean government (MSIT) (RS-2023-00275360), and the German Research Foundation (DFG) Grant PF 450/11-1.

\subsection*{Conflict of interest}

On behalf of all authors, the corresponding author states that there is no conflict of interest.

\subsection*{Availability of data and materials}

Not applicable. 

\subsection*{Authors' contributions}
All authors contributed equally to this work. All authors are first authors, and the authors' names are listed in alphabetical order.

\bibliography{submission-arXiv/parallelepipeds-bibliography}

\end{document}